\newtheorem{theorem}{Theorem}
\newtheorem{lemma}[theorem]{Lemma}
\theoremstyle{definition}
\newtheorem{definition}[theorem]{Definition}
\newtheorem{remark}[theorem]{Remark}
\author{Pablo Romero\footnote{Facultad de Ingenier\'ia, Universidad de la Rep\'ublica, Montevideo, Uruguay. E-mail address: \texttt{promero@fing.edu.uy}}
\footnote{Facultad de Ciencias Exactas y Naturales, Universidad de Buenos Aires, Ciudad Universitaria. Av. Int. Güiraldes 2160. Buenos Aires, Argentina.}}
\date{}
\begin{document}

\title{Existence, uniqueness and construction\\ of locally most reliable two-terminal graphs}
\maketitle

\begin{abstract}\let\thefootnote\relax
A two-terminal graph is a graph $G$ equipped with $2$ vertices in $V(G)$ called terminals. 
Let $T_{n,m}$ be the set of two-terminal graphs on $n$ vertices and $m$ edges. Let $G$ be in $T_{n,m}$ and let $p$ be in $[0,1]$. 
The two-terminal reliability of $G$ at $p$, denoted $R_G(p)$, is the probability that $G$ has a path joining its terminals after each of its edges is independently removed with probability $1-p$. We say $G$ is a \emph{locally most reliable two-terminal graph} (LMRTTG) if for each $H$ in $T_{n,m}$ there exists a positive real number $\delta$ such that for every $p$ in $(0,\delta)$ it holds that $R_G(p) \geq R_H(p)$. 

It is simple to prove that there exists a unique LMRTTG in $T_{n,m}$ when $n\geq 4$ and $5\leq m \leq 2n-3$. Gong and Lin~[Discrete Appl. Math. 356 (2024), 393–402] further proved that there exists a unique LMRTTG in  $T_{n,m}$ when $n\geq 6$ and $2n-3 \leq m \leq \binom{n}{2}$, except for some pairs of integers $n$ and $m$ which satisfy that $n \geq 6$ and $\frac{1}{2}\binom{n-2}{2}-\frac{n-2}{2}\leq m-(2n-3) \leq \frac{1}{2}\binom{n-2}{2}+\frac{n-2}{2}$. All cases unresolved in earlier works are covered here. In this article it is proved that in each set $T_{n,m}$ such that $n\geq 4$ and $5\leq m \leq \binom{n}{2}$ 
there exists a unique LMRTTG, called $G_{n,m}$. A construction of $G_{n,m}$ is also given. 
\end{abstract}

\renewcommand{\labelitemi}{--}

\section{Introduction}\label{section:intro}
A two-terminal graph is a graph $G$ equipped with $2$ vertices in $V(G)$ called terminals. Let $T_{n,m}$ be the set consisting of all two-terminal graphs on $n$ vertices and $m$ edges. Let $G$ be in $T_{n,m}$ and let $p$ be in $[0,1]$. The two-terminal reliability of $G$ at $p$, denoted $R_G(p)$, is the probability that $G$ has a path joining its terminals after each of its edges is independently removed with probability $1-p$. We say $G$ is a \emph{uniformly most reliable two-terminal graph} if $R_G(p)\geq R_H(p)$ for each $H$ in $T_{n,m}$ and for every $p$ in $[0,1]$. We say $G$ is a \emph{locally most reliable two-terminal graph} (LMRTTG) if for each $H$ in $T_{n,m}$ there exists a positive real number $\delta$ such $R_G(p) \geq R_H(p)$ for each $H$ in $T_{n,m}$ and for every $p$ in $(0,\delta)$. 

Independent works of Bertrand et al.~\cite{Bertrand-2018} and Xie et al.~\cite{Xie-2021} conclude that there is no uniformly most reliable two-terminal graph on $n$ vertices and $m$ edges when  $n\geq 11$ and either $20 \leq m \leq 3n-9$ or $3n-5 \leq m \leq \binom{n}{2}-2$. Considering that for most cases uniformly most reliable two-terminal graphs do not exist, effort was carried out to characterize LMRTTGs. Bertrand et al.~\cite{Bertrand-2018} determined the only LMRTTG in each $T_{n.m}$  when $n\geq 4$ and $5\leq m \leq 3n-6$. 
More recently, Gong and Lin~\cite{Gong-2024} succeeded to determine the only LMRTTG in  $T_{n,m}$ when $n\geq 6$, $2n-3\leq m \leq \binom{n}{2}$ and further, the first Zagreb index of the graphs $C_{n-2,m-2n+3}^1$ and $S_{n-2,m-2n+3}^1$ are different, where 
$C_{n,m}^1$ and $S_{n,m}^1$ stand for the quasi-complete and quasi-star graphs, respectively. 

The gap unresolved in earlier works is filled here. This article is organized as follows. Section~\ref{section:concepts} includes graph theoretic concepts that are used throughout the document. Section~\ref{section:background} presents the concept of LMRTTGs, a well known methodology to construct such two-terminal graphs, and the progress in the determination of LMRTTGs. The main theorem of this article is presented in Section~\ref{section:main}. This theorem asserts that there exists a unique LMRTTG in $T_{n,m}$ 
when $n\geq 4$ and $5\leq m \leq \binom{n}{2}$, that is called $G_{n,m}$. The two-terminal graph $G_{n,m}$ is explicitly constructed. 

\section{Basic concepts}\label{section:concepts}
All graphs in this work are finite and undirected, with no loops nor multiple edges. Let $G$ be a graph. The complement of $G$ is denoted $\overline{G}$. 
The vertex set and edge set of $G$ are denoted $V(G)$ and $E(G)$, respectively. A subgraph $H$ of $G$ is a graph such that $V(H) \subseteq V(G)$ and $E(H) \subseteq E(G)$. A spanning graph of $G$ is a subgraph $H$ of $G$ such that $V(H)=V(G)$. 
Two vertices $v_i$ and $v_j$ in $G$ are  adjacent when $v_iv_j \in E(G)$. A vertex $v$ in $G$ is universal (isolated) when $v$ is adjacent to all (resp. none) of the vertices in $V(G)-v$. A threshold graph is a graph that can be constructed from $K_1$ by repeatedly adding an isolated vertex or a universal vertex. Let $P_n$, $C_n$, and $K_n$ be the path, the cycle, and the complete graph on $n$ vertices, respectively. The graph $C_3$ is sometimes called the \emph{triangle}. The number of triangles, $3$-paths, and $4$-paths in $G$ are denoted $k_3(G)$, $p_3(G)$, and $p_4(G)$, respectively. For each $v$ in $V(G)$ we denote $d_G(v)$ its degree in $G$; we will denote $d(v)$ instead of $d_G(v)$ when the graph $G$ is clear from context. The first Zagreb index of $G$, denoted $M_1(G)$, equals $\sum_{v \in V(G)}d_G(v)^2$. The second Zagreb index of $G$, denoted $M_2(G)$, equals $\sum_{v_iv_j \in E(G)}d_G(v_i)d_G(v_j)$.  
The union between two graphs $G$ and $H$ is a graph whose vertex set is $V(G) \cup V(H)$ and whose edge set is $E(G) \cup E(H)$. For each positive integer $s$ we denote $sG$ the graph consisting of a union of $s$ disjoint copies of the graph $G$. The join $G\vee H$ between the graphs $G$ and $H$ arises from $G\cup H$ by adding precisely one edge joining every vertex of $G$ to every vertex of $H$.

\section{Related work}\label{section:background}
Let $G$ be in $T_{n,m}$ and let $p$ be in $[0,1]$. 
The two-terminal reliability of $G$ at $p$, denoted $R_G(p)$, is the probability that $G$ has a path joining its terminal after each of its edges is independently removed with probability $1-p$. For each $i$ in $\{1,2\ldots,m\}$, we define $N_i(G)$ as the number of spanning subgraphs in $G$ composed by $i$ edges that have some path joining the terminals in $G$. Clearly,
\begin{equation*}
R_G(p) = \sum_{i=1}^{m}N_i(G)p^i(1-p)^{n-i}.    
\end{equation*}

The following concept is the object under study in this article.
\begin{definition}
A two-terminal graph $G$ in $T_{n,m}$ is a \emph{locally most reliable two-terminal graph} (LMRTTG) if for each $H$ in $T_{n,m}$ there exists a positive real number $\delta$ such that for every $p$ in $(0,\delta)$ it holds that $R_G(p)\geq R_H(p)$.  
\end{definition}

Lemma~\ref{lemma:local} can be proved using elementary calculus. 
\begin{lemma}[Observation 2.1 in~\cite{Brown-2014}]\label{lemma:local}
Let $G$ and $H$ be a pair of two-terminal graphs in $T_{n,m}$. If there exists $j$ in 
$\{1,2,\ldots,m\}$ such that $N_i(G)=N_i(H)$ for all $i\in\{1,2\ldots,j-1\}$ and $N_j(G)>N_j(H)$, then there exists $\delta>0$ such that $R_G(p)>R_H(p)$ for every $p$ in $(0,\delta)$.
\end{lemma}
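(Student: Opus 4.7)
The plan is to form the difference $f(p) = R_G(p) - R_H(p)$ and show that it is positive in a punctured right neighborhood of $0$ by reducing to a one-sided limit argument. Using the polynomial expansion stated just before the lemma, I would write
\begin{equation*}
f(p) \;=\; \sum_{i=1}^{m} \bigl(N_i(G)-N_i(H)\bigr)\, p^i (1-p)^{m-i}.
\end{equation*}
The hypothesis says that the terms for $i<j$ vanish identically, so the sum actually starts at $i=j$.

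Next I would factor out the lowest power of $p$ that appears, namely $p^j$, to obtain
\begin{equation*}
f(p) \;=\; p^j \sum_{i=j}^{m} \bigl(N_i(G)-N_i(H)\bigr)\, p^{i-j}(1-p)^{m-i}.
\end{equation*}
Denote the sum on the right by $g(p)$. It is a polynomial in $p$, hence continuous on $[0,1]$, and direct substitution gives $g(0) = N_j(G)-N_j(H) > 0$ by hypothesis (all other summands carry a positive power of $p$ and vanish at $0$).

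By continuity of $g$ at $0$, there exists $\delta>0$ such that $g(p) > 0$ for all $p \in [0,\delta)$. For $p\in(0,\delta)$ one has $p^j>0$ and $g(p)>0$, so $f(p) = p^j g(p) > 0$, which is exactly $R_G(p) > R_H(p)$ on this interval.

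I do not expect any real obstacle here: the statement is essentially the observation that the sign of a polynomial near $0$ is governed by its lowest-order nonzero coefficient, and the only thing one needs is that $R_G(p)-R_H(p)$ is a polynomial whose coefficients in the $\{p^i(1-p)^{m-i}\}$ basis are the differences $N_i(G)-N_i(H)$. The one minor point worth flagging is that the basis $\{p^i(1-p)^{m-i}\}_{i=0}^{m}$ is not the monomial basis, but this causes no trouble because $(1-p)^{m-i}$ tends to $1$ as $p\to 0^+$, so the lowest-order behavior is still dictated by the smallest index $i$ with a nonzero coefficient.
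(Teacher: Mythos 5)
Your proof is correct and is exactly the elementary argument the paper alludes to: the paper does not prove Lemma~1 itself, citing Brown and Cox and remarking only that it ``can be proved using elementary calculus,'' and your factor-out-$p^j$-and-use-continuity argument is that proof. One small note: you silently (and correctly) replace the exponent $(1-p)^{n-i}$ appearing in the paper's displayed reliability polynomial with $(1-p)^{m-i}$, which is indeed the right form since the sum is over the $m$ edges.
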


For each $G$ in $T_{n,m}$ we define $N(G)$ as $(N_1(G),N_2(G),\ldots,N_m(G))$. 
By Lemma~\ref{lemma:local}, if $G$ is any LMRTTG in $T_{n,m}$ then $N(G)$ is maximum  
in $T_{n,m}$ under the lexicographic order. In fact, define $T_{n,m}^{(0)}$ as $T_{n,m}$. For each $i\in \{1,2,\ldots,m\}$, define $T_{n,m}^{(i)}$ as the set consisting of all two-terminal graphs in $T_{n,m}^{(i-1)}$ that maximize $N_i(G)$, i.e., 
\begin{equation*}
T_{n,m}^{(i)} = \{G: G\in T_{n,m}^{(i-1)}, \, \, N_i(G)\geq N_i(H) \text{ for each } H \text{ in } T_{n,m}^{(i-1)}\}.    
\end{equation*}

The following result is an application of Lemma~\ref{lemma:local}.
\begin{lemma}[Lemma 6 in~\cite{2025-Romero-TCS}]\label{lemma:set}
For each pair of integers $n$ and $m$ such that $T_{n,m}$ is not empty, the set consisting of all LMRTTGs in $T_{n,m}$ is not empty and equals $T_{n,m}^{(m)}$.    
\end{lemma}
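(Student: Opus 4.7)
The plan is to prove the two assertions separately: first that $T_{n,m}^{(m)}$ is nonempty, and then that it coincides with the set of LMRTTGs in $T_{n,m}$.

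The nonemptiness is immediate from the recursive definition. Since $T_{n,m}^{(0)} = T_{n,m}$ is nonempty by hypothesis and finite, and since $T_{n,m}^{(i)}$ is the subset of graphs in $T_{n,m}^{(i-1)}$ attaining the maximum value of $N_i$, a simple induction shows that each $T_{n,m}^{(i)}$ is nonempty. In particular $T_{n,m}^{(m)} \neq \emptyset$. In passing I would record the following structural remark, obtained by the same induction: all graphs lying in $T_{n,m}^{(j)}$ share the same value of $N_i$ for every $i \leq j$, because each $T_{n,m}^{(i)}$ is by definition a level set of $N_i$ inside $T_{n,m}^{(i-1)}$.

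For the inclusion $T_{n,m}^{(m)} \subseteq \{\text{LMRTTGs}\}$, I would pick $G \in T_{n,m}^{(m)}$ and any $H \in T_{n,m}$. If $N(G) = N(H)$, then $R_G(p) = R_H(p)$ identically and there is nothing to prove; otherwise let $j$ be the smallest index where the vectors differ. By induction on $i$, using that $G \in T_{n,m}^{(i)}$ realizes the maximum of $N_i$ over $T_{n,m}^{(i-1)}$, I would show that $H \in T_{n,m}^{(i)}$ for every $i < j$: indeed if $H \in T_{n,m}^{(i-1)}$ then $N_i(H) = N_i(G)$ equals that maximum, so $H \in T_{n,m}^{(i)}$ as well. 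In particular $H \in T_{n,m}^{(j-1)}$, and since $G \in T_{n,m}^{(j)}$ maximizes $N_j$ over this set while $N_j(G) \neq N_j(H)$, we obtain $N_j(G) > N_j(H)$. Lemma~\ref{lemma:local} then provides the desired $\delta$.

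For the converse inclusion, I would argue by contradiction. Let $G$ be an LMRTTG and suppose $G \notin T_{n,m}^{(m)}$. Let $j$ be the smallest index with $G \notin T_{n,m}^{(j)}$; then $G \in T_{n,m}^{(j-1)}$, and by definition of $T_{n,m}^{(j)}$ there exists $H \in T_{n,m}^{(j-1)}$ with $N_j(H) > N_j(G)$. By the structural remark above applied inside $T_{n,m}^{(j-1)}$, we have $N_i(G) = N_i(H)$ for every $i < j$. Lemma~\ref{lemma:local} yields some $\delta_1 > 0$ such that $R_H(p) > R_G(p)$ on $(0,\delta_1)$, while the LMRTTG property of $G$ gives $\delta_2 > 0$ with $R_G(p) \geq R_H(p)$ on $(0,\delta_2)$, a contradiction on any $p \in (0, \min\{\delta_1,\delta_2\})$. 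I do not anticipate any real obstacle in this argument; the only place to be careful is the bookkeeping in the induction that guarantees membership of $H$ in the appropriate $T_{n,m}^{(i)}$, which is needed precisely to invoke Lemma~\ref{lemma:local} with the prefix equality hypothesis.
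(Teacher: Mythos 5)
Your proof is correct. The paper does not reproduce a proof of this lemma (it is imported as Lemma 6 of~\cite{2025-Romero-TCS} and described only as ``an application of Lemma~\ref{lemma:local}''), and your argument --- nonemptiness of each $T_{n,m}^{(i)}$ by finiteness, the prefix-equality bookkeeping to invoke Lemma~\ref{lemma:local} for the inclusion $T_{n,m}^{(m)}\subseteq\{\text{LMRTTGs}\}$, and the contradiction via the first level $j$ with $G\notin T_{n,m}^{(j)}$ for the converse --- is exactly the standard application the paper has in mind, with all the necessary details in place.
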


In this work we will use Remark~\ref{remark:singleton}, which is a corollary of Lemma~\ref{lemma:set}.
\begin{remark}\label{remark:singleton}
If there exist a two-terminal graph $G$ in $T_{n,m}$ and some integer $j$ in $\{1,2,\ldots,m\}$ such that $T_{n,m}^{(j)}=\{G\}$, then $G$ is the only LMRTTG in $T_{n,m}$.
\end{remark}

Bertrand et al.~\cite{Bertrand-2018} found the only LMRTTG in $T_{n,m}$ when $n\geq 4$ and $5\leq m \leq 3n-6$. For our purposes, we present a statement that appears in~\cite{Bertrand-2018} which determines the only LMRTTG in $T_{n,m}$ when $n\geq 4$ and $5\leq m\leq 2n-3$.
\begin{definition}\label{definition:Hnm}
Let $n$ and $m$ be integers such that $n\geq 4$ and $5\leq m \leq 2n-3$. The two-terminal graph $G_{n,m}$ has vertex set $\{s,t\}\cup \{v_1,v_2,\ldots,v_{n-2}\}$ where $s$ and $t$ denote its terminal vertices, and its edge set $E(G_{n,m})$ is defined as follows,
\begin{enumerate}[label=(\roman*)]
\item if $m$ is odd then 
$E(G_{n,m})=\{st\} \cup \{sv_i,v_it, i\in \{1,2,\ldots,\frac{m-1}{2}\}\}$; 
\item if $m$ is even then $E(G_{n,m})=\{st\} \cup \{sv_i,v_it, i\in \{1,2,\ldots,\frac{m-2}{2}\}\} \cup \{v_1v_2\}$.
\end{enumerate}
\end{definition}

\begin{lemma}[Proposition 2.3 in~\cite{Bertrand-2018}]\label{lemma:onlyH}
Let $n$ and $m$ be integers such that $n\geq 4$ and $5\leq m \leq 2n-3$. The only LMRTTG in $T_{n,m}$ is $G_{n,m}$.
\end{lemma}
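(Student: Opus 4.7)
The plan is to invoke Remark~\ref{remark:singleton} by computing $N_i(G)$ in order $i = 1, 2, 3$ and, at each step, using maximality of $N_i$ to narrow the candidate set, until the isomorphism class of $G_{n,m}$ (fixing the terminals $s$ and $t$) is the only possibility. Since $N_1(G)$ equals $1$ when $st \in E(G)$ and $0$ otherwise, $T_{n,m}^{(1)}$ is exactly the family of graphs in $T_{n,m}$ that contain the edge $st$.

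A direct count then gives $N_2(G) = (m-1) + c(G)$ for $G$ in $T_{n,m}^{(1)}$, where $c(G)$ denotes the number of common neighbors of $s$ and $t$: the term $m-1$ counts the $2$-edge spanning subgraphs that contain $st$, while $c(G)$ counts the $2$-edge spanning subgraphs that form an $s$-$t$ path through a common neighbor. Because each common neighbor consumes two of the $m-1$ edges other than $st$, and because the hypothesis $m \le 2n-3$ guarantees $\lfloor (m-1)/2 \rfloor \le n-2$, the maximum of $c(G)$ over $T_{n,m}^{(1)}$ is exactly $\lfloor (m-1)/2 \rfloor$. When $m = 2k+1$ is odd this maximum absorbs every edge other than $st$ into one of $k$ parallel $2$-paths from $s$ to $t$, so $G_{n,m}$ is the unique (up to isomorphism fixing the terminals) graph attaining it; hence $T_{n,m}^{(2)} = \{G_{n,m}\}$ and the odd case is finished by Remark~\ref{remark:singleton}.

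When $m = 2k+2$ is even, the condition $c(G) = k$ leaves exactly one extra edge $e$, which to keep $c(G)$ at $k$ rather than $k+1$ must be of one of four kinds: (i) between two common neighbors of $s$ and $t$, (ii) between a common neighbor and a non-terminal that is not a common neighbor, (iii) between two non-terminal non-common vertices, or (iv) from $s$ or $t$ to a non-terminal non-common vertex. A similar counting argument gives $N_3(G) = \binom{m-1}{2} + c(G)(m-3) + p_3^{st}(G)$ for $G$ in $T_{n,m}^{(2)}$, where $p_3^{st}(G)$ is the number of $3$-edge spanning subgraphs of $G$ that form an $s$-$t$ path of length $3$; the first two summands are constant over $T_{n,m}^{(2)}$, so maximising $N_3$ reduces to maximising $p_3^{st}$. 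A short case check shows $p_3^{st}(G) = 2$ in case (i), witnessed by the two length-$3$ paths through $v_i$ and $v_j$ in the two possible orders, and $p_3^{st}(G) = 0$ in cases (ii)--(iv), since in those cases the extra edge does not join a non-terminal neighbor of $s$ to a non-terminal neighbor of $t$. This gives $T_{n,m}^{(3)} = \{G_{n,m}\}$, and a second application of Remark~\ref{remark:singleton} concludes the proof. The main obstacle is the case analysis in the even case: one must check that the enumeration (i)--(iv) exhausts the placements of $e$ compatible with $c(G) = k$, and that only case (i) contributes to $p_3^{st}$.
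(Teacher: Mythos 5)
Your argument is correct, but note that the paper does not prove this lemma at all: it is imported verbatim as Proposition~2.3 of Bertrand et al.~\cite{Bertrand-2018}, so there is no in-paper proof to match. What you have written is a self-contained derivation using exactly the machinery the paper deploys for the harder range $m>2n-3$, namely the lexicographic filtration $T_{n,m}^{(1)}\supseteq T_{n,m}^{(2)}\supseteq\cdots$ together with Remark~\ref{remark:singleton}. Your counts check out: $N_1$ forces $st\in E(G)$; $N_2(G)=(m-1)+c(G)$ with maximum $c=\lfloor (m-1)/2\rfloor$ attainable because $m\leq 2n-3$ gives enough non-terminal vertices, which pins down $G_{n,m}$ outright when $m$ is odd; and in the even case the $2k+1$ forced edges leave a single free edge, your cases (i)--(iv) exhaust its placements, the decomposition $N_3(G)=\binom{m-1}{2}+c(G)(m-3)+p_3^{st}(G)$ has no double counting (two distinct $2$-paths or a $2$-path inside a $3$-path would require four or more edges), and only placement (i) yields $p_3^{st}=2$. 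Two small points worth making explicit: the hypothesis $m\geq 5$ is what guarantees $k\geq 2$ in the even case, so that placement (i) actually exists and the maximum of $p_3^{st}$ is $2$ rather than $0$ (for $m=4$ the argument would stall at $T_{n,m}^{(3)}$); and uniqueness is of course up to isomorphism of two-terminal graphs fixing $\{s,t\}$, as you note. With those caveats the proof is complete and arguably more informative than the citation, since it exhibits why the filtration terminates at level $2$ or $3$ in this edge range rather than at level $5$ as in Theorem~\ref{theorem:main}\ref{teo2}.
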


Let $\mathcal{G}_{n,m}$ be the set of simple graphs on $n$ vertices and $m$ edges. Assign, for each two-terminal graph $G$ with terminals $s$ and $t$, a simple graph $\hat{G}$ given by $G-s-t$. When $n\geq 4$ and $2n-3< m\leq \binom{n}{2}$, it is known that a two-terminal graph in $T_{n,m}$ is in $T_{n,m}^{(3)}$ if and only if its terminals are universal vertices~\cite{Bertrand-2018}. Xie et al.~\cite{Xie-2021} further proved that 
a two-terminal graph $G$ is in $T_{n,m}^{(4)}$ if and only if both its terminals are universal vertices in $G$ and $\hat{G}$ has the maximum number of $3$-paths among all simple graphs in  $\mathcal{G}_{n-2,m-2n+3}$. Observe that the equality $M_1(\hat{G})=2p_3(\hat{G})+2m$ holds for all graphs $\hat{G}$ in $\mathcal{G}_{n,m}$. Therefore, a two-terminal graph $G$ is in $T_{n,m}^{(4)}$ if and only if its terminals are universal vertices and $\hat{G}$ maximizes its first Zagreb index among all simple graphs in  $\mathcal{G}_{n-2,m-2n+3}$. Gong and Lin~\cite{Gong-2024} further proved that a two-terminal graph $G$ in $T_{n,m}^{(4)}$ is in $T_{n,m}^{(5)}$ if and only if $\hat{G}$ maximizes the invariant $H(\hat{G})$ given by $M_2(\hat{G})-6k_3(\hat{G})$, where $M_2(\hat{G})$ is the second Zagreb index of $\hat{G}$ and $k_3(\hat{G})$ is the number of triangles in $\hat{G}$. The following definitions given by Gong and Lin will be useful for our purposes.
\begin{definition}[Gong and Lin~\cite{Gong-2024}] 
A graph $G$ in $\mathcal{G}_{n,m}$ is $M$-optimal if $M_1(G)\geq M_1(G')$ for each $G'$ in $\mathcal{G}_{n,m}$.    
\end{definition}

\begin{definition}[Gong and Lin~\cite{Gong-2024}]\label{def:Hoptimal}
An $M$-optimal graph $G$ in $\mathcal{G}_{n,m}$ is $H$-optimal if 
$H(G)\geq H(G')$ for each $M$-optimal graph $G'$ in $\mathcal{G}_{n,m}$.  
\end{definition}

Gong and Lin proved the following lemma.
\begin{lemma}[Gong and Lin~\cite{Gong-2024}]\label{lemma:Hoptimality}
Let $n$ and $m$ be integers such that $n\geq 4$ and $2n-3<m\leq \binom{n}{2}$. A two-terminal graph $G$ in $T_{n,m}$ is in $T_{n,m}^{(5)}$ if and only if its terminal vertices $s$ and $t$ are universal and the simple graph $\hat{G}$ given by $G-s-t$ is $H$-optimal.    
\end{lemma}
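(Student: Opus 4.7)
The plan is to assemble the statement from two results already cited in the excerpt: (i) the Xie et al.~\cite{Xie-2021} characterization that $G\in T_{n,m}^{(4)}$ precisely when $s,t$ are universal in $G$ and $\hat{G}$ is $M$-optimal in $\mathcal{G}_{n-2,m-2n+3}$ (reformulated via the identity $M_1(\hat G)=2p_3(\hat G)+2m$); and (ii) the Gong--Lin~\cite{Gong-2024} refinement that a graph $G\in T_{n,m}^{(4)}$ lies in $T_{n,m}^{(5)}$ if and only if $\hat{G}$ maximizes $H(\hat{G})=M_2(\hat{G})-6k_3(\hat{G})$ over all $\hat{G}'$ with $G'\in T_{n,m}^{(4)}$.

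For the forward direction, suppose $G\in T_{n,m}^{(5)}$. The chain $T_{n,m}^{(5)}\subseteq T_{n,m}^{(4)}$ combined with (i) yields that $s$ and $t$ are universal vertices of $G$ and that $\hat{G}$ is $M$-optimal. Applying (ii) then gives $H(\hat{G})\geq H(\hat{G}')$ for every $G'\in T_{n,m}^{(4)}$. As $G'$ ranges over $T_{n,m}^{(4)}$, the correspondence $G'\mapsto \hat{G}'=G'-s-t$ bijects onto the set of $M$-optimal simple graphs in $\mathcal{G}_{n-2,m-2n+3}$, so the previous inequality is exactly the content of Definition~\ref{def:Hoptimal}; hence $\hat{G}$ is $H$-optimal.

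For the converse, assume $s$ and $t$ are universal in $G$ and $\hat{G}$ is $H$-optimal. Since $H$-optimality requires $M$-optimality, (i) places $G$ in $T_{n,m}^{(4)}$. The inequality $H(\hat{G})\geq H(\hat{G}')$ granted by $H$-optimality holds for every $M$-optimal $\hat{G}'\in\mathcal{G}_{n-2,m-2n+3}$, i.e., for every $\hat{G}'$ obtainable from some $G'\in T_{n,m}^{(4)}$. This is precisely the hypothesis of (ii), which then puts $G$ in $T_{n,m}^{(5)}$.

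I do not expect a substantial obstacle; the only point worth explicit bookkeeping is the bijection $G\leftrightarrow \hat{G}$ between $T_{n,m}^{(4)}$ and the $M$-optimal graphs in $\mathcal{G}_{n-2,m-2n+3}$. Universalizing $s$ and $t$ contributes exactly $2(n-2)+1=2n-3$ edges, so the residual edge count $m-(2n-3)$ of $\hat{G}$ is non-negative thanks to $m>2n-3$ and at most $\binom{n-2}{2}$ thanks to $m\leq \binom{n}{2}$; and once $s,t$ are universal, the remaining edges of $G$ are determined by, and determine, $\hat{G}$. With this compatibility verified, the lemma is a direct transcription of (i) and (ii).
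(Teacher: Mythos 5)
Your proposal is correct and follows essentially the same route as the paper, which presents this lemma as a cited packaging of the Xie et al.\ characterization of $T_{n,m}^{(4)}$ (via the identity $M_1(\hat G)=2p_3(\hat G)+2m$) together with the Gong--Lin characterization of $T_{n,m}^{(5)}$, exactly the two ingredients you combine. The bookkeeping you flag (that universal terminals contribute $2n-3$ edges, so $\hat G$ ranges over $\mathcal{G}_{n-2,m-2n+3}$ and the $M$-optimal graphs there correspond precisely to members of $T_{n,m}^{(4)}$) is the only substantive point, and you handle it correctly.
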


In this article it is proved that in each nonempty set $\mathcal{G}_{n,m}$ there exists a unique $H$-optimal graph, that will be called $H_{n,m}$. We will also construct  $H_{n,m}$ (see Theorem~\ref{theorem:unique}). Then, we will use $H_{n,m}$ to characterize LMRTTG (see Theorem~\ref{theorem:main}). In fact, let $n$ and $m$ be integers such that $n\geq 4$ and $5\leq m\leq \binom{n}{2}$. If $5\leq m \leq 2n-3$ then Lemma~\ref{lemma:onlyH} gives the only LMRTTG in $T_{n,m}$. Otherwise, define $G_{n,m}$ as the two-terminal graph such that its terminals are universal and $\hat{G}_{n,m}$ equals the simple graph $H_{n-2,m-2n+3}$. As $H_{n,m}$ is the only $H$-optimal graph in $\mathcal{G}_{n-2,m-2n+3}$, Lemma~\ref{lemma:Hoptimality} gives that $T_{n,m}^{(5)}=\{G_{n,m}\}$. Finally, as $T_{n,m}^{(5)}=\{G_{n,m}\}$, Remark~\ref{remark:singleton} gives that $G_{n,m}$ is the only LMRTTG in $T_{n,m}$ thus proving Theorem~\ref{theorem:main}.\\   

We remark that $H$-optimal graphs are necessarily $M$-optimal (see Definition~\ref{def:Hoptimal}). As a consequence, we first revisit the body of related work dealing with $M$-optimality. Let $n$ be any positive integer and $m$ be any integer such that and $0\leq m\leq \binom{n}{2}$. Define the only integers $j$, $k$, $j'$ and $k'$ such that $1\leq j \leq k$,  $1\leq j' \leq k'$, and 
\begin{align}
m &= \binom{k+1}{2}-j; \label{eq:kj} \\
m &= \binom{n}{2}-\binom{k'+1}{2}+j'. \label{eq:kj'}
\end{align}
Let us define the following graphs, 
\begin{align*}
C_{n,m}^{1} &= (K_{k-j} \vee (K_1 \cup K_j)) \cup (n-k-1)K_1,\\
C_{n,m}^{2} &= ( K_1 \vee (K_{k-1} \cup (k-j)K_1) ) \cup (n-2k+j)K_1, \text{ if } k+1\leq 2k-j-1 \leq n-1,\\
C_{n,m}^{3} &= (K_{k-2}\vee 3K_1) \cup (n-k-1)K_1, \text{ if } j=3,\\
S_{n,m}^{1} &= K_{n-k'-1} \vee ( (K_1 \vee j'K_1) \cup (k'-j')K_1),\\
S_{n,m}^{2} &= K_{n-2k'+j'} \vee ( (K_{k'-j'} \vee (k'-1)K_1) \cup K_1), \text{ if } k'+1\leq 2k'-j'-1\leq n-1,\\
S_{n,m}^{3} &= K_{n-k'-1} \vee ( K_3 \cup (k'-2)K_1), \text{ if } j'=3.
\end{align*}
Let us define the set $\mathcal{O}_{n,m}$ consisting of each of the previous graphs in $\mathcal{G}_{n,m}$, whenever exist. The graphs $C_{n,m}^1$ and $S_{n,m}^1$ are known as the \emph{quasi-complete} and \emph{quasi-star}, respectively. For each $i\in \{1,2,3\}$ it holds that 
$\overline{S^i_{n,m}} = C_{n,\binom{n}{2}-m}^{i}$. Peled et al.~\cite{1999-Peled} and  Byer~\cite{Byer-1999} independently proved that each $M$-optimal graph in $\mathcal{G}_{n,m}$ belongs to the set $\mathcal{O}_{n,m}$. Byer also proved the following result using counting techniques.

\begin{lemma}[Theorem 4.3 and Corollary 4.4 in~\cite{Byer-1999}]\label{lemma:ByerMoptimality}
Let $n$ be any positive integer and let $m$ be any integer such that $0\leq m \leq n-1$. 
Let $m'=\binom{n}{2}-m$. The following assertions hold.
\begin{enumerate}[label=(\roman*)]
\item If $m\neq 3$ then $S_{n,m}^1$ (resp. $C_{n,m'}^1$) is the only $M$-optimal graph in $\mathcal{G}_{n,m}$ (resp. in $\mathcal{G}_{n,m'}$).
\item If $m=3$ then $S_{n,m}^1$ and $C_{n,m}^1$ (resp. $S_{n,m'}^1$ and $C_{n,m'}^1$) are the only $M$-optimal graphs in $\mathcal{G}_{n,m}$ (resp. in $\mathcal{G}_{n,m'}$).
\end{enumerate}
\end{lemma}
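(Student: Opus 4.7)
The plan is to combine the Peled--Byer classification recalled above (every $M$-optimal graph in $\mathcal{G}_{n,m}$ belongs to the finite list $\mathcal{O}_{n,m}$) with an explicit comparison of the first Zagreb indices of the members of $\mathcal{O}_{n,m}$ when $m\leq n-1$, and then to transfer the conclusion to $\mathcal{G}_{n,m'}$ by graph complementation.

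The first step is to identify the members of $\mathcal{O}_{n,m}$ in the small-edge regime. Writing $m=\binom{n}{2}-\binom{k'+1}{2}+j'$ as in~\eqref{eq:kj'}, the hypothesis $m\leq n-1$ forces $k'=n-1$ and $j'=m$, so $S^1_{n,m}$ collapses to the star-plus-isolated-vertices graph $K_{1,m}\cup(n-1-m)K_1$ with $M_1(S^1_{n,m})=m^2+m$. Dually, writing $m=\binom{k+1}{2}-j$ as in~\eqref{eq:kj}, the graph $C^1_{n,m}$ has at most $k+1$ non-isolated vertices and a direct count of degrees gives
\begin{equation*}
M_1(C^1_{n,m})=(k-j)k^2+(k-j)^2+j(k-1)^2.
\end{equation*}
I would then tabulate, for each $m\in\{0,1,\ldots,n-1\}$, which of $C^2_{n,m},C^3_{n,m},S^2_{n,m},S^3_{n,m}$ actually belong to $\mathcal{G}_{n,m}$ (respecting the side conditions $k+1\leq 2k-j-1\leq n-1$, $j=3$, and their duals) and compute their $M_1$-values in closed form by the same degree bookkeeping.

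The second step is the comparison. The guiding principle is that concentrating all $m$ edges at a single vertex maximizes $\sum_v d_v^2$ under the constraint $\sum_v d_v=2m$ with $d_v\leq m$, and every member of $\mathcal{O}_{n,m}$ distinct from $S^1_{n,m}$ has maximum degree at most $m-1$ as soon as $m\geq 4$. A short arithmetic manipulation of the closed forms from the previous step then yields $M_1(G)<m^2+m$ for every such $G$ whenever $m\neq 3$. The exceptional case $m=3$ arises because $C^1_{n,3}=K_3\cup(n-3)K_1$ attains $M_1=12=3^2+3$; here I would verify by direct substitution that neither $C^2_{n,3}$ nor $S^2_{n,3}$ exists and that $C^3_{n,3}$ and $S^3_{n,3}$ coincide with $S^1_{n,3}$ and $C^1_{n,3}$ respectively, so $\mathcal{O}_{n,3}=\{S^1_{n,3},C^1_{n,3}\}$ consists of exactly two tied graphs. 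Together with the Peled--Byer list, this proves items (i) and (ii) for $\mathcal{G}_{n,m}$.

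Finally, I would deduce the twin statements for $\mathcal{G}_{n,m'}$ by complementation. Expanding $\sum_v(n-1-d_G(v))^2$ yields the identity
\begin{equation*}
M_1(\overline{G})=n(n-1)^2-4(n-1)m+M_1(G),
\end{equation*}
whose right-hand side differs from $M_1(G)$ by a constant depending only on $n$ and $m$. Hence the bijection $G\mapsto\overline{G}$ from $\mathcal{G}_{n,m}$ to $\mathcal{G}_{n,m'}$ sends $M$-optimal graphs to $M$-optimal graphs, and the identity $\overline{S^i_{n,m}}=C^i_{n,m'}$ recorded in the excerpt transfers the conclusions verbatim. The main obstacle I expect is the bookkeeping at the small-$m$ boundary, particularly $m\in\{0,1,2,3\}$, where the parameters $k,j,k',j'$ degenerate, several of the graphs in $\mathcal{O}_{n,m}$ either coincide with $S^1_{n,m}$ or fail to be defined, and the unique tie at $m=3$ must be isolated without accidentally losing or duplicating a candidate.
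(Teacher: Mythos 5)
The paper does not actually prove this lemma: it is imported verbatim from Byer (Theorem 4.3 and Corollary 4.4 of~\cite{Byer-1999}), with only the remark that Byer established it ``using counting techniques.'' So there is no internal proof to compare against, and your proposal has to be judged as a self-contained re-derivation. As such, the plan is sound. Your route --- restrict to the finite candidate list $\mathcal{O}_{n,m}$ via Lemma~\ref{lemma:C1S1}(i) (which the paper states independently of the present lemma, so there is no circularity), compute $M_1$ for each candidate in the regime $m\leq n-1$, and transfer to $m'=\binom{n}{2}-m$ via $M_1(\overline{G})=n(n-1)^2-4(n-1)m+M_1(G)$ together with $\overline{S^i_{n,m}}=C^i_{n,m'}$ --- is valid, and the specific facts you lean on all check out: for $1\leq m\leq n-1$ one indeed gets $k'=n-1$, $j'=m$, hence $S^1_{n,m}=K_{1,m}\cup(n-1-m)K_1$ with $M_1=m^2+m$; the existence condition for $S^2_{n,m}$ becomes $m\leq n-3$ and $m\geq n-2$ simultaneously, so $S^2_{n,m}$ never exists in this range; and $C^3_{n,3}=S^1_{n,3}$, $S^3_{n,3}=C^1_{n,3}$, so $\mathcal{O}_{n,3}$ collapses to exactly the two tied graphs with $M_1=12$.

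One caution, so that the deferred ``short arithmetic manipulation'' does not get waved away: your guiding principle is not by itself a proof. Knowing only that every $G\in\mathcal{O}_{n,m}\setminus\{S^1_{n,m}\}$ has maximum degree at most $m-1$ yields $M_1(G)\leq 2m(m-1)$, which \emph{exceeds} $m^2+m$ for all $m>3$; moreover the unconstrained maximizer of $\sum_v d_v^2$ subject to $\sum_v d_v=2m$ and $d_v\leq m$ is two entries equal to $m$ (total $2m^2$), not a single entry of $m$ --- graphicality is doing real work. The strict inequality $M_1(G)<m^2+m$ for $m\geq 4$ therefore genuinely requires the closed forms you compute in step one (or the sharper bound $M_1(C^i_{n,m})\leq 2mk$ with $k=O(\sqrt{m})$, which settles $k\geq 5$ and leaves a finite check for $k\in\{3,4\}$). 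Since you do commit to tabulating those closed forms, this is an infelicity of motivation rather than a gap, and with that understood the proposal is correct.
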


A characterization of $M$-optimal graphs was given later by \'Abrego et al.~\cite{Abrego-2009}. Their main results concerning $M$-optimality are given in Lemma~\ref{lemma:C1S1} and Lemma~\ref{lemma:Moptimal}.

\begin{lemma}[Theorem 2.4 in~\cite{Abrego-2009}]\label{lemma:C1S1}
 Let $n$ be any positive integer and let $m$ be any integer such that $0\leq m \leq \binom{n}{2}$. 
 All the following assertions simultaneously hold. 
\begin{enumerate}[label=(\roman*)]
\item Each $M$-optimal graph in $\mathcal{G}_{n,m}$ belongs to $\mathcal{O}_{n,m}$.
\item At least one of $C_{n,m}^1$ or $S_{n,m}^{1}$ is $M$-optimal.  
\item If $C_{n,m}^1$ is $M$-optimal then $C_{n,m}^2$ and $C_{n,m}^3$ are $M$-optimal, whenever these graphs exist.
\item If $S_{n,m}^1$ is $M$-optimal then $S_{n,m}^2$ and $S_{n,m}^3$ are $M$-optimal, whenever these graphs exist.
\end{enumerate}
\end{lemma}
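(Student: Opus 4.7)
The plan is to exploit the fact that $M_1(G)$ depends only on the degree sequence of $G$, so that the optimization reduces to identifying extremal degree sequences. A standard switching argument shows that every $M$-optimal degree sequence is realized by a threshold graph: if $G$ is not threshold, there exist vertices $u,v,x,y$ with $uv,xy \in E(G)$ and $ux,vy \notin E(G)$ and $d_G(u)\geq d_G(x)$, $d_G(v)\geq d_G(y)$; replacing $uv, xy$ by $ux, vy$ preserves $n$ and $m$ and does not decrease $M_1$. Since threshold graphs are parametrized by $\{0,1\}$-creation sequences (isolated vs.\ universal additions), the search is reduced to a discrete combinatorial problem.

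For part (i), I would write an arbitrary threshold graph in \emph{block form} as alternating blocks of isolated and universal vertices, and express $M_1$ as a polynomial in the block sizes. Local perturbations that preserve $n$ and $m$ force any $M$-optimal threshold graph to have only a small number of nontrivial blocks; using the representations $m=\binom{k+1}{2}-j$ and $m=\binom{n}{2}-\binom{k'+1}{2}+j'$ of equations~\eqref{eq:kj}--\eqref{eq:kj'}, a case analysis pins down the surviving candidates as precisely the six families in $\mathcal{O}_{n,m}$. For part (ii), the complementation identity $M_1(\overline{G}) = M_1(G) + n(n-1)^2 - 4m(n-1)$ (immediate from $d_{\overline{G}}(v) = n-1-d_G(v)$), together with $\overline{S_{n,m}^i} = C_{n,\binom{n}{2}-m}^i$, reduces the problem to the range $m \leq \binom{n}{2}/2$; in this range one verifies directly that the maximum of $M_1$ over $\mathcal{O}_{n,m}$ is attained by $S_{n,m}^1$.

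The main obstacle is parts (iii) and (iv). By complementation it suffices to prove (iii), because $\overline{C_{n,m}^i} = S_{n,\binom{n}{2}-m}^i$ and the complementation identity shifts $M_1$ by a quantity depending only on $n$ and $m$. The approach is to compute the differences $M_1(C_{n,m}^1) - M_1(C_{n,m}^2)$ and $M_1(C_{n,m}^1) - M_1(C_{n,m}^3)$ as explicit polynomials in $n,k,j$ using the block decomposition, and verify that these differences vanish precisely on the parameter sets where $C_{n,m}^2$ and $C_{n,m}^3$ respectively exist (namely $k+1 \leq 2k-j-1 \leq n-1$ and $j=3$). This algebraic verification is routine in each regime, but the bookkeeping across both existence regimes while respecting the implicit inequalities on $n,k,j$ is where I expect most of the work to lie.
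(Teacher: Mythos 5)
This statement is quoted verbatim from Ábrego et al.\ (Theorem 2.4 in~\cite{Abrego-2009}); the paper offers no proof of its own, so your proposal can only be judged on its internal merits, and there it has two genuine gaps. The first is the reduction to threshold graphs. The $2$-switch you describe (delete $uv,xy$, add $ux,vy$) leaves \emph{every} vertex degree unchanged: $u$ loses $v$ and gains $x$, $v$ loses $u$ and gains $y$, and so on. Hence it preserves $M_1$ exactly and, more importantly, preserves the whole degree sequence, so no amount of such switching can turn a graph with a non-threshold degree sequence into a threshold graph. The argument is vacuous as a reduction. What actually works is the single-edge compression: if $vw\in E$, $uw\notin E$, $w\neq u$, and $d(u)\geq d(v)$, replacing $vw$ by $uw$ changes $M_1$ by $2(d(u)-d(v))+2>0$, so an $M$-optimal graph must have nested neighbourhoods, i.e.\ be threshold. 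Without this, part (i) — the real content of the lemma — does not get off the ground.

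The second gap is in your treatment of part (ii): the claim that for $m\leq\frac{1}{2}\binom{n}{2}$ the maximum of $M_1$ over $\mathcal{O}_{n,m}$ is attained by $S_{n,m}^1$ is false. Take $(n,m)=(7,10)$, so $m=10<\frac{21}{2}$: here $C_{7,10}^1=K_5\cup 2K_1$ has $M_1=80$, while $S_{7,10}^1=K_1\vee((K_1\vee 4K_1)\cup K_1)$ has $M_1=36+25+4\cdot 4+1=78$. This is exactly the phenomenon recorded in Lemma~\ref{lemma:Moptimal}(iii): when $q_n<0$ there is a window $[\frac{1}{2}\binom{n}{2}-R_n,\frac{1}{2}\binom{n}{2}]$ below the midpoint on which $C_{n,m}^1$ dominates. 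Fortunately assertion (ii) does not need your claim: once (i) is established and you have verified (as in your plan for (iii)--(iv), which is sound) that $M_1(C_{n,m}^1)=M_1(C_{n,m}^2)=M_1(C_{n,m}^3)$ and $M_1(S_{n,m}^1)=M_1(S_{n,m}^2)=M_1(S_{n,m}^3)$ whenever those graphs exist, every graph in $\mathcal{O}_{n,m}$ has $M_1$ equal to $M_1(C_{n,m}^1)$ or $M_1(S_{n,m}^1)$, and (ii) is immediate without deciding which of the two wins. Your complementation identity $M_1(\overline{G})=M_1(G)+n(n-1)^2-4m(n-1)$ and the use of $\overline{S_{n,m}^i}=C^i_{n,\binom{n}{2}-m}$ to deduce (iv) from (iii) are both correct.
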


For each integer $n$ such that $n\geq 5$ we define the  only integers $k_n$ and $\alpha_n$, and the only real numbers $q_n$ and $R_n$ satisfying the following relations,
\begin{align}
\binom{k_n}{2} &\leq \frac{1}{2}\binom{n}{2} < \binom{k_n+1}{2}, \label{eqkn}\\
\alpha_n       &= \binom{k_n}{2}, \label{eqalphan}\\
q_n            &= \frac{1-2(2k_n-3)^2+(2n-5)^2}{4}, \label{eqqn}\\
R_n            &= \frac{4(\binom{n}{2}-2\binom{k_n}{2})(k_n-2)}{-1-2(2k_n-4)^2+(2n-5)^2}. \label{eqRn}
\end{align}

\begin{lemma}[Theorem 2.8 in~\cite{Abrego-2009}]\label{lemma:Moptimal}
Let $n$ and $m$ be integers such that $n\geq 5$ and $0\leq m \leq \binom{n}{2}$. Define the numbers $k_n$, $\alpha_n$, $q_n$ and $R_n$ by Equations~\eqref{eqkn}, \eqref{eqalphan}, \eqref{eqqn}, and \eqref{eqRn}, respectively. All the following assertions hold.
\begin{enumerate}[label=(\roman*)]
\item If $q_n>0$ then 
\begin{align*}
M_1(S_{n,m}^1) &\geq M_1(C_{n,m}^1) \text{ when } 0\leq m \leq \frac{1}{2}\binom{n}{2},\\
M_1(C_{n,m}^1) &\geq M_1(S_{n,m}^1) \text{ when } \frac{1}{2}\binom{n}{2}\leq m \leq \binom{n}{2},
\end{align*}
and $M_1(S_{n,m}^1)=M_1(C_{n,m}^1)$ if and only if $m \in \{0,1,2,3,\binom{n}{2},\binom{n}{2}-1,\binom{n}{2}-2,\binom{n}{2}-3,\frac{1}{2}\binom{n}{2}\}$ or $m \in \{\alpha_n,\binom{n}{2}-\alpha_n\}$ and 
$(2n-3)^2-2(2k_n-3)^2$ equals either $-1$ or $7$.
\item If $q_n=0$ then
\begin{align*}
M_1(S_{n,m}^1) &\geq M_1(C_{n,m}^1) \text{ when } 0\leq m \leq \frac{1}{2}\binom{n}{2},\\
M_1(C_{n,m}^1) &\geq M_1(S_{n,m}^1) \text{ when } \frac{1}{2}\binom{n}{2}\leq m \leq \binom{n}{2},
\end{align*}
and $M_1(S_{n,m}^1)=M_1(C_{n,m}^1)$ if and only if $m \in \{0,1,2,3,\binom{n}{2},\binom{n}{2}-1,\binom{n}{2}-2,\binom{n}{2}-3,\frac{1}{2}\binom{n}{2}\}$ $\cup$ $\{\alpha_n,\alpha_n+1,\ldots,\binom{n}{2}-\alpha_n\}$.
\item If $q_n<0$ then
\begin{align*}
M_1(S_{n,m}^1) &\geq M_1(C_{n,m}^1) \text{ when } 0\leq m \leq \frac{1}{2}\binom{n}{2}-R_n \text{ or } \frac{1}{2}\binom{n}{2}\leq m \leq \frac{1}{2}\binom{n}{2}+R_n,\\
M_1(C_{n,m}^1) &\geq M_1(S_{n,m}^1) \text{ when } \frac{1}{2}\binom{n}{2}-R_n\leq m \leq \frac{1}{2}\binom{n}{2} \text{ or } \frac{1}{2}\binom{n}{2}+R_n\leq m \leq \binom{n}{2},
\end{align*}
and $M_1(S_{n,m}^1)=M_1(C_{n,m}^1)$ if and only if $m\in \{0,1,2,3,\binom{n}{2},\binom{n}{2}-1,\binom{n}{2}-2,\binom{n}{2}-3\}\cup \{\frac{1}{2}\binom{n}{2},\frac{1}{2}\binom{n}{2}-R_n,\frac{1}{2}\binom{n}{2}+R_n\}$.
\end{enumerate}
\end{lemma}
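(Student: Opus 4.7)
The plan is to compute $M_1(C_{n,m}^1)$ and $M_1(S_{n,m}^1)$ in closed form from their degree sequences and to analyze the sign of the difference $\Delta(m) := M_1(S_{n,m}^1) - M_1(C_{n,m}^1)$ as $m$ varies from $0$ to $\binom{n}{2}$. Writing $m = \binom{k+1}{2} - j$ with $1 \le j \le k$, the structure $C_{n,m}^1 = (K_{k-j} \vee (K_1 \cup K_j)) \cup (n-k-1)K_1$ yields degrees $k$ (with multiplicity $k-j$), $k-j$ (once), and $k-1$ (with multiplicity $j$) in the nontrivial component and zeros elsewhere, so
\begin{equation*}
M_1(C_{n,m}^1) = (k-j)k^2 + (k-j)^2 + j(k-1)^2,
\end{equation*}
which is a polynomial of degree two in $j$ for each fixed $k$.

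Rather than repeat the computation for the quasi-star, I would exploit the identity $\overline{S_{n,m}^1} = C_{n,\binom{n}{2}-m}^1$ recorded just above the lemma. Since $d_{S_{n,m}^1}(v) = n-1-d_{\overline{S_{n,m}^1}}(v)$ for each vertex $v$, expanding squares gives
\begin{equation*}
M_1(S_{n,m}^1) = n(n-1)^2 - 4(n-1)\bigl(\tbinom{n}{2}-m\bigr) + M_1\bigl(C_{n,\binom{n}{2}-m}^1\bigr).
\end{equation*}
Combining this with the closed form above yields an explicit expression for $\Delta(m)$ in terms of the two parameterizations $m = \binom{k+1}{2}-j$ and $\binom{n}{2}-m = \binom{k'+1}{2}-j'$. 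A direct consequence is the skew-symmetry $\Delta\bigl(\binom{n}{2}-m\bigr) = -\Delta(m)$, which lets me restrict attention to $0 \le m \le \tfrac12\binom{n}{2}$.

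The core of the argument is the behavior of $\Delta$ on the central block containing $m = \tfrac12\binom{n}{2}$. Equations~\eqref{eqkn} and~\eqref{eqalphan} identify this block as the interval on which $k = k_n$; substituting into the formula for $\Delta$ shows that on this block $\Delta$ is a quadratic polynomial in $j$ whose leading coefficient is a positive multiple of the quantity $q_n$ defined in~\eqref{eqqn}, and whose nontrivial root, when it exists, lies at the value $R_n$ of~\eqref{eqRn}. This produces the trichotomy: if $q_n > 0$ then $\Delta$ is convex on the central block and strictly positive except possibly at isolated arithmetic coincidences; if $q_n = 0$ then $\Delta$ is forced to vanish identically on a whole interval of $m$'s, producing the equality range $\{\alpha_n, \alpha_n+1, \ldots, \binom{n}{2}-\alpha_n\}$ of part~(ii); if $q_n < 0$ then $\Delta$ is concave on the central block with two interior roots at $\tfrac12\binom{n}{2} \pm R_n$, giving the sign reversal of part~(iii).

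The routine portion is checking that the sign of $\Delta$ is constant on each noncentral block and agrees with the claim, together with the trivial equalities at $m \in \{0,1,2,3\}$ and their reflections, where $C_{n,m}^1$ and $S_{n,m}^1$ either coincide up to isomorphism or share the same degree multiset. The delicate step, and the main obstacle, is pinning down the extra equality cases of part~(i) at $m \in \{\alpha_n, \binom{n}{2}-\alpha_n\}$: forcing $\Delta(\alpha_n) = 0$ under $q_n > 0$ reduces, after clearing denominators, to the Pell-type integer condition $(2n-3)^2 - 2(2k_n-3)^2 \in \{-1,7\}$, whose solutions must be enumerated exactly and shown to account for all accidental coincidences. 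Verifying that no additional equality hides inside a noncentral block requires the most careful bookkeeping in the proof.
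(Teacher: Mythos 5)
First, a point of comparison: the paper does not prove this lemma at all — it is imported verbatim as Theorem 2.8 of \'Abrego, Fern\'andez-Merchant, Neubauer and Watkins \cite{Abrego-2009} — so there is no in-paper argument to measure yours against. Judged on its own terms, your opening moves are correct and do match the published proof's starting point: the degree-sequence computation $M_1(C_{n,m}^1)=(k-j)k^2+(k-j)^2+j(k-1)^2$, the complementation identity $M_1(S_{n,m}^1)=n(n-1)^2-4(n-1)\bigl(\binom{n}{2}-m\bigr)+M_1\bigl(C_{n,\binom{n}{2}-m}^1\bigr)$, and the skew-symmetry $\Delta\bigl(\binom{n}{2}-m\bigr)=-\Delta(m)$ all check out.

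The central mechanism you describe, however, is wrong. On any range of $m$ where both block parameters $k$ and $k'$ are constant, one has $M_1(C_{n,m}^1)=k^3+k^2+j^2+(1-4k)j$ with $j=\binom{k+1}{2}-m$, and the analogous expression at $\binom{n}{2}-m$ with $j'=\binom{k'+1}{2}-\binom{n}{2}+m$; in the difference $\Delta(m)$ the $m^2$ contribution of $j^2$ cancels exactly against that of $j'^2$, so $\Delta$ is piecewise \emph{linear} in $m$, not piecewise quadratic. The quantity $q_n$ is not the leading coefficient of any quadratic: a direct computation shows it equals $-1$ times the slope of the linear piece on which $k=k'=k_n$. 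Hence there is no convexity/concavity dichotomy and no pair of ``interior roots of a quadratic at $\frac12\binom{n}{2}\pm R_n$'': for $q_n<0$ the central piece crosses zero exactly once, at the centre itself (forced by skew-symmetry), and the points $\frac12\binom{n}{2}\pm R_n$ are zeros of \emph{adjacent} linear pieces having a different slope, which is what the denominator of $R_n$ records. A concrete check: for $n=7$ one gets $q_7=-4$, $R_7=\frac32$, and $\Delta(9)=0$, $\Delta(10)=-2$, $\Delta(11)=2$, $\Delta(12)=0$; the piece with $k=k'=5$ is only $10\le m\le 11$ and has slope $4=-q_7$, while the zero at $m=9=\frac12\binom{7}{2}-R_7$ lies on the neighbouring piece of slope $-2$. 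Two further consequences: the verification that $\Delta$ keeps a constant sign on every noncentral block, which you label routine, is exactly where the bulk of the real proof's bookkeeping lives, since a piecewise linear function with slopes of varying sign can in principle dip to zero inside a block; and the Pell-type condition for the sporadic equalities at $m=\alpha_n$ arises from evaluating a linear piece at the block endpoint $\binom{k_n}{2}$, not from a discriminant condition. As written, the proposal would not assemble into a proof without replacing its core analytic claim.
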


The following corollary of Lemma~\ref{lemma:Moptimal} will be useful for our purpose.
\begin{lemma}[Corollary 2.10 in~\cite{Abrego-2009}]\label{lemma:different}
Let $n$ be an integer such that $n\geq 6$. The following assertions hold.
\begin{enumerate}[label=(\roman*)]
\item If $m$ is an integer such that $4\leq m < \frac{1}{2}\binom{n}{2}-\frac{n}{2}$ then $M_1(S_{n,m}^1)>M_1(C_{n,m}^1)$.
\item If $m$ is an integer such that $\frac{1}{2}\binom{n}{2}+\frac{n}{2}< m \leq \binom{n}{2}-4$ then $M_1(C_{n,m}^1)>M_1(S_{n,m}^1)$.
\end{enumerate}
\end{lemma}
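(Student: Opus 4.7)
The plan is to deduce (ii) from (i) by graph complementation and then prove (i) via a direct appeal to Lemma~\ref{lemma:Moptimal}, after a careful inspection of its equality cases. For the reduction, observe that for any $G \in \mathcal{G}_{n,m}$ one has $M_1(\overline{G}) = M_1(G) + n(n-1)^2 - 4(n-1)m$, so the difference $M_1(S_{n,m}^1) - M_1(C_{n,m}^1)$ is invariant under $G \mapsto \overline{G}$ (the $m$-dependent shift cancels). Since the excerpt already records $\overline{S_{n,m}^1} = C_{n,\binom{n}{2}-m}^1$ and $\overline{C_{n,m}^1} = S_{n,\binom{n}{2}-m}^1$, the inequality $M_1(S_{n,m}^1) > M_1(C_{n,m}^1)$ is equivalent to $M_1(C_{n,m'}^1) > M_1(S_{n,m'}^1)$ with $m' = \binom{n}{2} - m$; the two $m$-ranges appearing in (i) and (ii) correspond under this involution.

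To prove (i), I would fix $n \geq 6$ and $m$ with $4 \leq m < \tfrac{1}{2}\binom{n}{2} - \tfrac{n}{2}$, and split into the three cases $q_n > 0$, $q_n = 0$, $q_n < 0$. When $q_n \geq 0$, Lemma~\ref{lemma:Moptimal}(i)--(ii) gives $M_1(S_{n,m}^1) \geq M_1(C_{n,m}^1)$ throughout $[0,\tfrac{1}{2}\binom{n}{2}]$, and enumerates its equality cases explicitly: the trivial set $\{0,1,2,3,\tfrac{1}{2}\binom{n}{2}\}$ is ruled out immediately by the hypotheses on $m$, leaving the candidate $m = \alpha_n$ (subject to the Pell-type side condition when $q_n > 0$, or extended to the entire block $\{\alpha_n,\ldots,\tfrac{1}{2}\binom{n}{2}\}$ when $q_n = 0$). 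When $q_n < 0$, Lemma~\ref{lemma:Moptimal}(iii) reverses the sign of $M_1(S_{n,m}^1) - M_1(C_{n,m}^1)$ at $m = \tfrac{1}{2}\binom{n}{2} - R_n$, so the task becomes verifying that the entire range sits strictly to the left of this transition.

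The core of the argument then concentrates in two quantitative estimates derived from Equations~\eqref{eqkn}--\eqref{eqRn}. First, when $q_n \leq 0$, I expect to prove $\alpha_n \geq \tfrac{1}{2}\binom{n}{2} - \tfrac{n}{2}$ and $R_n \leq \tfrac{n}{2}$ by bounding $\tfrac{1}{2}\binom{n}{2} - \binom{k_n}{2}$ using the defining squeeze $\binom{k_n}{2} \leq \tfrac{1}{2}\binom{n}{2} < \binom{k_n+1}{2}$ together with the sign constraint on $q_n$ (which forces $k_n$ to be close to $n/\sqrt{2}$). Second, when $q_n > 0$ and $\alpha_n$ happens to lie inside $[4, \tfrac{1}{2}\binom{n}{2} - \tfrac{n}{2})$, I would show that the side condition $(2n-3)^2 - 2(2k_n-3)^2 \in \{-1,7\}$ cannot hold: the main obstacle, I expect, is precisely this Pell-type analysis, since it requires showing that any solution of the two relevant Pell equations in the variables $(n, k_n)$ automatically forces $\alpha_n \geq \tfrac{1}{2}\binom{n}{2} - \tfrac{n}{2}$, thereby pushing $m = \alpha_n$ outside our range. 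The assumption $n \geq 6$ absorbs a handful of sporadic small cases that may need to be verified by direct computation, with the asymptotic behaviour taking over for large $n$.
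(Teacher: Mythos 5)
First, a point of reference: the paper does not prove this statement at all --- it is imported verbatim as Corollary~2.10 of \'Abrego et al.~\cite{Abrego-2009} --- so there is no internal proof to compare against, and your attempt must stand on its own. Your reduction of (ii) to (i) is correct: from $d_{\overline{G}}(v)=n-1-d_G(v)$ one gets $M_1(\overline{G})=M_1(G)+n(n-1)^2-4(n-1)m$, hence $M_1(S_{n,m}^1)-M_1(C_{n,m}^1)=M_1(C_{n,m'}^1)-M_1(S_{n,m'}^1)$ with $m'=\binom{n}{2}-m$, and the two ranges in (i) and (ii) do correspond under $m\mapsto m'$. Your case split on the sign of $q_n$ and your identification of exactly which equality cases of Lemma~\ref{lemma:Moptimal} must be excluded are also correct.

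The gap is that the two quantitative claims you defer with ``I expect to prove'' and ``I would show'' are the entire mathematical content of the corollary, and you never establish them. Concretely: (a) whenever equality at $m=\alpha_n$ is possible (i.e.\ $q_n=0$, or $q_n>0$ with $(2n-3)^2-2(2k_n-3)^2\in\{-1,7\}$) you need $\alpha_n\geq \frac{1}{2}\binom{n}{2}-\frac{n}{2}$; and (b) for $q_n<0$ you need $R_n\leq \frac{n}{2}$. Claim (a) is in fact a short computation: substituting the relevant Pell-type relation into $\alpha_n=\binom{k_n}{2}$ gives $\alpha_n-\left(\frac{1}{2}\binom{n}{2}-\frac{n}{2}\right)$ equal to $k_n-\frac{n}{2}+\frac{1}{2}$, $k_n-\frac{1}{2}$, or $k_n-1$ in the three subcases, all positive since $k_n>\frac{n-1}{2}$ for $n\geq 4$; so your expectation that this Pell analysis is ``the main obstacle'' is misplaced --- it is the easy part. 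Claim (b) is the genuinely delicate estimate: writing $u=\binom{n}{2}-2\binom{k_n}{2}$, the numerator of $R_n$ is of order $u\cdot k_n$ (potentially of order $n^2$, since $u$ can be as large as roughly $2k_n$) while the denominator is only of order $n$, so $R_n\leq \frac{n}{2}$ does not follow from the crude bound $k_n\approx n/\sqrt{2}$ alone, and nothing in your write-up addresses it. As written the proposal is a sound plan rather than a proof; you must either carry out (a) and (b) explicitly or do what the paper does and cite Corollary~2.10 of~\cite{Abrego-2009}.
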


In the remaining of this section we include a list of results concerning $H$-optimal graphs. 
Using Ferrers diagrams from threshold graphs, Gong and Lin~\cite{Gong-2024} proved Lemma~\ref{lemma:chains}.
\begin{lemma}[Corollaries 3.5 and 3.6 in~\cite{Gong-2024}]\label{lemma:chains}
Let $n$ and $m$ be integers such that $n\geq 4$ and $0\leq m \leq \binom{n}{2}$, where $m\neq 5$. The following assertions hold.
\begin{enumerate}[label=(\roman*)]
\item\label{chains1} The inequalities $H(S_{n,m}^2)>H(S_{n,m}^1)>H(S_{n,m}^3)$ hold whenever $S_{n,m}^2$ and $S_{n,m}^3$ exist.
\item\label{chains2} The inequalities $H(C_{n,m}^3)>H(C_{n,m}^1)>H(C_{n,m}^2)$ hold whenever $C_{n,m}^2$ and $C_{n,m}^3$ exist.
\end{enumerate}
\end{lemma}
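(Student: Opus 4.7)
The plan is to verify the four claimed strict inequalities by computing $M_2$ and $k_3$ (and hence $H=M_2-6k_3$) explicitly for each of the six graphs involved. Each of $S_{n,m}^1,S_{n,m}^2,S_{n,m}^3,C_{n,m}^1,C_{n,m}^2,C_{n,m}^3$ is assembled from a bounded number of blocks of the form $K_a$ or $aK_1$ combined under join and disjoint union, so its vertex set partitions into four or five classes all of whose members have a common degree. Once those classes and degrees are listed, $M_2$ decomposes as a short sum of $d(u)d(v)$ indexed by pairs of classes that are joined by an edge, and $k_3$ decomposes as a short sum indexed by triples of classes that span a triangle; both then become closed-form expressions in the parameters $n,k',j'$ (for the $S^i$) or $n,k,j$ (for the $C^i$).

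Concretely, for $S_{n,m}^1=K_{n-k'-1}\vee((K_1\vee j'K_1)\cup(k'-j')K_1)$, writing $a=n-k'-1$, there are $a$ universal vertices of degree $n-1$, one ``star-center'' of degree $a+j'$, $j'$ ``star-leaves'' of degree $a+1$, and $k'-j'$ vertices of degree $a$; its triangles are of three types, namely those lying inside the universal clique, those meeting it in two vertices, and those using exactly one universal vertex together with an adjacent pair in the co-plus part (where the only adjacent pairs are a leaf together with the center). The same recipe, with slightly different block structure, applies to the remaining five graphs. After collecting terms I would show that
\[
H(S_{n,m}^2)-H(S_{n,m}^1),\quad H(S_{n,m}^1)-H(S_{n,m}^3),\quad H(C_{n,m}^3)-H(C_{n,m}^1),\quad H(C_{n,m}^1)-H(C_{n,m}^2)
\]
each simplify to a polynomial in the relevant parameters that is strictly positive in the range for which the graph in question exists. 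Substantial cancellation is expected because $S_{n,m}^1,S_{n,m}^2,S_{n,m}^3$ share the same $M_1$ value (they are all $M$-optimal by Lemma~\ref{lemma:C1S1}), hence many mass-type quantities cancel in the pairwise differences; an analogous remark holds for the $C^i$.

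The main obstacle I anticipate is the bookkeeping: each graph has four or five vertex-classes and the interactions across the joins produce many terms that must cancel carefully, and the degenerate cases in which a block has size $0$ or $1$ (for instance $k'-j'=0$ in $S_{n,m}^1$, or $n-2k'+j'=0$ in $S_{n,m}^2$) have to be checked separately to ensure both that the closed-form formulas remain valid and that the simplified polynomial is still strictly positive. One also has to trace why $m=5$ is the precise boundary case at which some strict inequality collapses; presumably this is the unique value at which the parameter constraints force a block that would otherwise be nontrivial to vanish. Once parts~\ref{chains1} and \ref{chains2} are reduced to verifying the positivity of these closed-form differences the result follows.
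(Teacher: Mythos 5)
First, a point of reference: the paper does not prove this lemma. It is quoted from Gong and Lin, who (as the paper notes) proved it using Ferrers diagrams of threshold graphs; the only trace of that proof retained here is the set of identities \eqref{eqC2}--\eqref{eqS3} recorded immediately after the statement. Your plan --- compute $M_2$ and $k_3$ class by class for each of the six graphs and check the sign of the four pairwise differences --- is a legitimate brute-force substitute, and it is exactly the style of computation the paper itself carries out for $H(C_{n,m}^1)$ in the proof of Lemma~\ref{lemma:C1}. You could also halve the bookkeeping you are worried about: since $\overline{S_{n,m}^i}=C_{n,\binom{n}{2}-m}^i$ and (as one checks) $M_1$ agrees across each family, Lemma~\ref{lemma:Hsum} converts each quasi-star difference into minus the corresponding quasi-complete difference at $m'=\binom{n}{2}-m$, so only the three $C^i$ computations are needed. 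Carried out, they give $H(C_{n,m}^3)-H(C_{n,m}^1)=3$ and $H(C_{n,m}^1)-H(C_{n,m}^2)=(k-\tfrac{7}{2})(k-j)(k-j-1)$; existence of $C_{n,m}^2$ forces $k-j\geq 2$, and $k=3$ then forces $j=1$, i.e.\ $m=5$, so the excluded case is exactly the one where positivity fails. This settles assertion~\ref{chains2}.

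The gap is in the final positivity check for assertion~\ref{chains1}, and it is not a mere bookkeeping issue. The same computation gives $H(S_{n,m}^2)-H(S_{n,m}^1)=(k'-\tfrac{7}{2})(k'-j')(k'-j'-1)$, which is \emph{negative} when $k'=3$, $j'=1$, i.e.\ when $\binom{n}{2}-m=5$. The hypothesis $m\neq 5$ does not exclude this: your guess that ``$m=5$ is the precise boundary case'' is right for the quasi-complete chain, but the quasi-star chain degenerates at the complementary value $m=\binom{n}{2}-5$. Concretely, for $(n,m)=(6,10)$ one has $k'=3$, $j'=1$, both graphs exist, $M_1(S_{6,10}^1)=M_1(S_{6,10}^2)=76$, yet $H(S_{6,10}^1)=134-6\cdot 6=98$ while $H(S_{6,10}^2)=139-6\cdot 7=97$, so $H(S_{n,m}^2)>H(S_{n,m}^1)$ fails as stated. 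Your plan of ``verifying that each closed-form difference is strictly positive in the existence range'' therefore cannot be completed for part~\ref{chains1} without adding the exclusion $m\neq\binom{n}{2}-5$ (harmless for the applications in this paper, since for $n\geq 6$ and $m=\binom{n}{2}-5$ the quasi-star graphs are not $M$-optimal, by Lemma~\ref{lemma:different} together with a direct check at $(6,10)$). Any writeup along your lines must surface and handle this case explicitly rather than treat $m=5$ as the only degeneracy.
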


During the proof of Lemma~\ref{lemma:chains}, the authors obtained the following identities.
\begin{align}
H(C_{n,m}^2) &= H(C_{n,m}^1)-(k-\frac{7}{2})(k-j)(k-j-1), \label{eqC2}\\
H(C_{n,m}^3) &= H(C_{n,m}^1)+3, \label{eqC3}\\
H(S_{n,m}^2) &= H(S_{n,m}^1)+(k'-\frac{7}{2})(k'-j')(k'-j'-1), \label{eqS2}\\
H(S_{n,m}^3) &= H(S_{n,m}^1)-3, \label{eqS3}
\end{align}
where $j$, $k$, $j'$ and $k'$ are the only integers satisfying Equations~\eqref{eq:kj} and \eqref{eq:kj'} such that $1\leq j\leq k$ and $1\leq j'\leq k'$. Note that if $m=5$ then $k=3$, $j=1$, and Equation~\eqref{eqC2} gives that $H(C_{n,m}^2)=H(C_{n,m}^1)+1>H(C_{n,m}^1)$. 

Gong and Lin proved that there exists a unique $H$-optimal graph  
in each of the infinitely many pairs of integers $(n,m)$ such that $n\geq 5$, 
$0\leq m \leq \binom{n}{2}$, and $M_1(S_{n,m}^1)\neq M_1(C_{n,m}^1)$. In fact, let us define the following sets of pairs of integers,
\begin{align*}
I   &= \{(n,m): n,m\in \mathbb{Z}, n\geq 5, 0\leq m \leq \binom{n}{2}\},\\
I^+ &= \{(n,m): (n,m) \in I, \, \, M_1(S_{n,m}^1)>M_1(C_{n,m}^1)\},\\
I^- &= \{(n,m): (n,m) \in I, \, \, M_1(C_{n,m}^1)>M_1(S_{n,m}^1)\},\\
I^* &= \{(n,m): (n,m) \in I, \, \, M_1(S_{n,m}^1)=M_1(C_{n,m}^1)\}.
\end{align*}

A consequence of Lemma~\ref{lemma:chains} is Lemma~\ref{lemma:Gong-H}, which is a  restatement of Theorem 3.8 in~\cite{Gong-2024}. 
\begin{lemma}[Theorem 3.8 in~\cite{Gong-2024}]\label{lemma:Gong-H}
Let $(n,m)$ be in $I^+ \cup I^-$. The following assertions hold.
\begin{enumerate}[label=(\roman*)]
\item If $(n,m)\in I^+$ and $S_{n,m}^{2}$ does (does not) exist, then $S_{n,m}^2$ (resp.  $S_{n,m}^1$) is the only $H$-optimal graph in $\mathcal{G}_{n,m}$.
\item If $(n,m)\in I^-$ and $C_{n,m}^{3}$ does (does not) exist, then $C_{n,m}^3$ (resp. $C_{n,m}^1$) is the only $H$-optimal graph in $\mathcal{G}_{n,m}$.
\end{enumerate}
\end{lemma}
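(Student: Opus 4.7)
The plan is to follow the two-step structure suggested by Definition~\ref{def:Hoptimal}: first identify the set of $M$-optimal graphs in $\mathcal{G}_{n,m}$, and then select, among them, the one that maximizes $H$.

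For assertion~(i), suppose $(n,m)\in I^+$, so that $M_1(S_{n,m}^1)>M_1(C_{n,m}^1)$. The first step is to show that the $M$-optimal graphs in $\mathcal{G}_{n,m}$ are exactly the well-defined elements of $\{S_{n,m}^1, S_{n,m}^2, S_{n,m}^3\}$. Indeed, $C_{n,m}^1$ cannot be $M$-optimal under the hypothesis, so Lemma~\ref{lemma:C1S1}(ii) forces $S_{n,m}^1$ to be $M$-optimal, and then Lemma~\ref{lemma:C1S1}(iv) promotes $S_{n,m}^2$ and $S_{n,m}^3$ to $M$-optimality whenever they exist. To exclude $C_{n,m}^2$ and $C_{n,m}^3$ from the $M$-optimal set, I would verify by direct inspection of the degree sequences arising from the explicit constructions in Section~\ref{section:background} the identity $M_1(C_{n,m}^1)=M_1(C_{n,m}^2)=M_1(C_{n,m}^3)$ whenever the last two graphs exist; the hypothesis then places every graph of the $C$-chain strictly below the maximum $M_1$, and Lemma~\ref{lemma:C1S1}(i) confines the $M$-optimal set to $\mathcal{O}_{n,m}$.

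With the $M$-optimal set in hand, the second step is to invoke Lemma~\ref{lemma:chains}(i), which gives $H(S_{n,m}^2)>H(S_{n,m}^1)>H(S_{n,m}^3)$ whenever these graphs exist and $m\neq 5$. Hence, among the $M$-optimal candidates, $S_{n,m}^2$ is the unique $H$-optimal graph when it exists, and $S_{n,m}^1$ is so otherwise. The corner case $m=5$ must be treated separately but is immediate: Lemma~\ref{lemma:ByerMoptimality}(i) gives that $S_{n,5}^1$ is already the unique $M$-optimal graph in $\mathcal{G}_{n,5}$ whenever $n\geq 6$ (the only range in which $(n,5)\in I^+$ can occur), and checking the existence inequality $k'+1\leq 2k'-j'-1\leq n-1$ against the values of $k'$ and $j'$ forced by $m=5$ shows that $S_{n,5}^2$ never exists in that range.

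Assertion~(ii) follows by a fully symmetric argument, swapping the roles of the $S$- and $C$-chains and using parts~(ii) and~(iii) of Lemma~\ref{lemma:C1S1} in combination with Lemma~\ref{lemma:chains}(ii) in place of its part~(i). The $m=5$ exclusion in Lemma~\ref{lemma:chains} is harmless here because $m=\binom{k+1}{2}-3$ has no integer solution at $m=5$, so $C_{n,5}^3$ does not exist, and moreover $(n,5)\notin I^-$ for any $n\geq 5$ by the same Byer-type argument. I expect the main obstacle to be the verification of the chain identities $M_1(C_{n,m}^1)=M_1(C_{n,m}^2)=M_1(C_{n,m}^3)$ and their $S$-chain counterparts: Lemma~\ref{lemma:C1S1} as stated is a one-directional implication and does not by itself prevent $C_{n,m}^2$ or $C_{n,m}^3$ from achieving the maximum $M_1$ while $C_{n,m}^1$ fails to do so, so this equality is the key technical step on which the whole two-step strategy rests.
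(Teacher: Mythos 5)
The paper does not actually prove this lemma: it is imported as a restatement of Theorem 3.8 of Gong and Lin, with only the remark that it is ``a consequence of Lemma~\ref{lemma:chains}''. Your reconstruction is essentially that intended derivation, and it is sound. In particular you correctly isolate the one point that Lemma~\ref{lemma:C1S1} alone does not settle, namely that $M_1$ is constant along each chain ($M_1(C_{n,m}^1)=M_1(C_{n,m}^2)=M_1(C_{n,m}^3)$ whenever the latter exist, and likewise for the $S$-chain); this is what confines the $M$-optimal set to a single chain when $(n,m)\in I^+\cup I^-$. The equality is true (it is precisely why parts (iii) and (iv) of Lemma~\ref{lemma:C1S1} hold), and verifying it from the explicit degree sequences is routine. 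Your separate treatment of $m=5$ on both sides is also correct: $S_{n,5}^2$ never exists for $n\ge 6$, and $(n,5)$ never lies in $I^-$.

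One corner to tighten in assertion~(i): Lemma~\ref{lemma:chains}(i), read with its stated hypothesis, guarantees the chain only when $S_{n,m}^2$ \emph{and} $S_{n,m}^3$ both exist, whereas you need $H(S_{n,m}^2)>H(S_{n,m}^1)$ whenever $S_{n,m}^2$ alone exists. By Equation~\eqref{eqS2} this difference equals $(k'-\tfrac{7}{2})(k'-j')(k'-j'-1)$, which is negative when $k'=3$ and $j'=1$, i.e.\ when $m=\binom{n}{2}-5$ --- a case in which $S_{n,m}^2$ exists but $S_{n,m}^3$ does not. The conclusion survives because $\bigl(n,\binom{n}{2}-5\bigr)$ lies in $I^-\cup I^*$, never in $I^+$ (this is the complement-symmetric counterpart of your $m=5$ observation), so $k'\ge 4$ in every case you actually need; but this should be argued explicitly from Equation~\eqref{eqS2} rather than absorbed into the citation of Lemma~\ref{lemma:chains}. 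The symmetric corner for assertion~(ii) is the $m=5$ case you already dispose of via Equation~\eqref{eqC2}.
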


Gong and Lin~\cite{Gong-2024} also proved that $M_2(G)=3k_3(G)+p_4(G)+2p_3(G)+m$. As $H(G)$ equals $M_2(G)-6k_3(G)$, we get that
\begin{equation}\label{eq:H1}
H(G) = -3k_3(G)+p_4(G)+2p_3(G)+m.    
\end{equation}
The following Ramsey-type identities obtained by Byer~\cite{Byer-1999} hold for each pair $(n,m)$ in $I$. 
These identities will be useful to find $H$-optimal graphs.
\begin{align}
k_3(G)+k_3(\overline{G}) &= \binom{n}{3}-m(n-2) + p_3(G), \label{eq:k3}\\
p_3(G)+p_3(\overline{G})&=2p_3(G)+(n-2)(\binom{n}{2}-2m), \label{eq:p3}\\
p_4(G)+p_4(\overline{G}) &= 2(n-5)p_3(G)+2m^2-8m+3mn-3\binom{n}{3}+(n-2)^2(\binom{n}{2}-3m). \label{eq:p4}    
\end{align}

\section{Main result}\label{section:main}
In this section we will prove that each nonempty set $\mathcal{G}_{n,m}$ has a unique  $H$-optimal graph. Additionally we will find, in each nonempty set  $\mathcal{G}_{n,m}$, the only $H$-optimal graph that will be called $H_{n,m}$. As a consequence we will construct, in each set $T_{n,m}$ such that $n\geq 4$ and $5\leq m \leq \binom{n}{2}$, the only two-terminal graph $G_{n,m}$ in $T_{n,m}$ that is LMRTTG. For ease of readability, the proofs of each of the lemmas included in this section are given in the Appendix. A method to find $H(G)$ for each graph $G$ in $\mathcal{O}_{n,m}$ as well as  the proof strategy of the main result of this section are given in the following paragraphs.\\

Let $n$ and $m$ be any pair of integers such that $(n,m)$ is in $I$. If we can find closed forms for both $H(C_{n,m}^1)$ and $H(S_{n,m}^1)$ then, using Equations~\eqref{eqC2}, \eqref{eqC3}, \eqref{eqS2}, and \eqref{eqS3}, it is possible to find $H(G)$ for each graph $G$ in $\mathcal{O}_{n,m}$. Lemma~\ref{lemma:C1} gives a closed form for $H(C_{n,m}^1)$, while Lemma~\ref{lemma:ZagregSnm1} gives $M_1(S_{n,m}^1)$. Once we find $H(C_{n,m}^1)$ and $M_1(S_{n,m}^1)$, we can obtain $H(S_{n,m}^1)$ using the fact that $\overline{S_{n,m}^1}=C_{n,\binom{n}{2}-m}$ followed by an application of Lemma~\ref{lemma:Hsum}.\\

Now, let us present the proof strategy of the main result of this section. 
By Lemma~\ref{lemma:Gong-H}, the study of $H$-optimality can be reduced, among all pairs in $I$, to those pairs in $I^*$. The study of all pairs $(n,m)$ in $I^*$ such that $m\in \{0,1,2,3,\binom{n}{2},\binom{n}{2}-1,\binom{n}{2}-2,\binom{n}{2}-3\}$ is accomplished in Lemma~\ref{lemma:Byer}. The few pairs $(n,m)$ in $I^*$ that were not considered in Lemma~\ref{lemma:Byer} such that $n\in \{5,6,7\}$ are treated separately in Lemma~\ref{lemma:7}. Finally, observe that by Lemma~\ref{lemma:different}, each of the pairs $(n,m)$ in $I^*$ not previously studied belongs to the set $J$ defined as follows,
\begin{equation*}
J = \left\{(n,m): n,m\in \mathbb{Z}, \, n\geq 8, \, \frac{1}{2}\binom{n}{2}-\frac{n}{2}\leq  m \leq \frac{1}{2}\binom{n}{2}+\frac{n}{2}\right\}.    
\end{equation*}
The analysis of $H$-optimality for pairs $(n,m)$ in $I^*\cap J$ is more involved. 
First we will prove that, when $n$ is large enough,  
$C_{n,m}^3$ is the only $H$-optimal graph in $\mathcal{G}_{n,m}$ whenever exists or $C_{n,m}^1$ is the only $H$-optimal graph in $\mathcal{G}_{n,m}$ otherwise. 
The only $H$-optimal graph in $\mathcal{G}_{n,m}$ for the remaining finitely many pairs $(n,m)$ in $I^*\cap J$ is obtained by exhaustive computation.\\

In order to find $H(C_{n,m}^1)$ we can calculate the second Zagreb index of the quasi-star graph $C_{n,m}^1$ and subtract six times its number of triangles. The result is presented in Lemma~\ref{lemma:C1}. 
\begin{lemma}\label{lemma:C1}
Let $n$ and $m$ be integers such that $n\geq 1$ and $0\leq m \leq \binom{n}{2}$. Define $j$ and $k$ as the only integers such that $1\leq j\leq k$ satisfying  Equation~\eqref{eq:kj}. Then,
\begin{equation*}
H(C_{n,m}^1)= \frac{1}{2}k^4-\frac{1}{2}k^3-3jk^2+(j^2+7j+1)k-\frac{5j^2+7j}{2}.
\end{equation*}
\end{lemma}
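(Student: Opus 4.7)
The plan is to compute $H(C_{n,m}^1)$ directly from its definition $H(G)=M_2(G)-6k_3(G)$, finding closed forms for $M_2$ and $k_3$ of the quasi-complete graph and then simplifying. As a preliminary observation, the $n-k-1$ isolated vertices of $C_{n,m}^1$ contribute nothing to either the degree-product sum defining $M_2$ or to the triangle count, so $H(C_{n,m}^1)$ depends only on $k$ and $j$ (which is consistent with the claimed formula), and it suffices to work with the nontrivial component $K_{k-j}\vee(K_1\cup K_j)$ on $k+1$ vertices.

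Next I would record the degree sequence of that component: each of the $k-j$ vertices of $K_{k-j}$ has degree $k$, each of the $j$ vertices of $K_j$ has degree $(j-1)+(k-j)=k-1$, and the singleton $K_1$ has degree $k-j$. The edge set then partitions into four classes (inside $K_{k-j}$, inside $K_j$, from $K_{k-j}$ to the singleton, and from $K_{k-j}$ to $K_j$) with known sizes and degree products, yielding
\begin{equation*}
M_2(C_{n,m}^1)=\binom{k-j}{2}k^2+\binom{j}{2}(k-1)^2+k(k-j)^2+jk(k-j)(k-1).
\end{equation*}

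For the triangles I would use inclusion--exclusion on non-edges. Inside the nontrivial component there are exactly $j$ non-edges, all incident with the singleton, each joining it to a distinct vertex of $K_j$. A triple of vertices is a triangle of $K_{k+1}$ but not of $C_{n,m}^1$ iff it contains at least one such non-edge; each non-edge lies in $k-1$ triples, and a pair of non-edges lies in a common triple only in the $\binom{j}{2}$ cases where the triple consists of the singleton together with two vertices of $K_j$ (with three or more non-edges inside a single triple impossible, since all non-edges share the singleton). Hence
\begin{equation*}
k_3(C_{n,m}^1)=\binom{k+1}{3}-j(k-1)+\binom{j}{2}.
\end{equation*}

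The last step is to substitute into $H=M_2-6k_3$ and simplify to a polynomial identity in $j$ and $k$. I expect the only real obstacle to be algebraic bookkeeping: after expanding the binomial coefficients and collecting terms, the $j^2k^2$ contributions from $M_2$ cancel, and the coefficients of $k^4,k^3,jk^2,j^2k,jk,k,j^2,j$ reassemble to match the claimed $\tfrac{1}{2}k^4-\tfrac{1}{2}k^3-3jk^2+(j^2+7j+1)k-\tfrac{5j^2+7j}{2}$. To keep the computation manageable I would write both $M_2(C_{n,m}^1)$ and $6k_3(C_{n,m}^1)$ as (polynomial in $k,j$)$/2$ over a common denominator before subtracting, and verify a small case (say $k=3,j=1$, i.e.\ the graph $K_2\vee 2K_1$, where both sides evaluate to $21$) as a sanity check.
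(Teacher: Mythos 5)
Your proposal is correct and follows essentially the same route as the paper: both compute $M_2(C_{n,m}^1)$ from the identical degree sequence and four-way edge partition, then subtract $6k_3(C_{n,m}^1)$ and simplify. The only cosmetic difference is the triangle count, which the paper obtains directly as $\binom{k}{3}+\binom{k-j}{2}$ while you get the equivalent $\binom{k+1}{3}-j(k-1)+\binom{j}{2}$ by inclusion--exclusion over the $j$ non-edges.
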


The determination of the first Zagreb index of $S_{n,m}^1$ is straightforward from its definition.
\begin{lemma}\label{lemma:ZagregSnm1}
Let $n$ and $m$ be integers such that $n\geq 1$ and $0\leq m \leq \binom{n}{2}$. Define the only integers $j'$ and $k'$ satisfying Equation~\eqref{eq:kj'} such that $1\leq j'\leq k'$. Then,
\begin{equation*}
M_1(S_{n,m}^1) = (n-k'-1+j')^2+j'(n-k')^2+(k'-j')(n-k'-1)^2+(n-k'-1)(n-1)^2.    
\end{equation*}
\end{lemma}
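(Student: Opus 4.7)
The plan is to read off the degree sequence of $S_{n,m}^1$ directly from the join/union decomposition in its definition, and then sum the squares. Recall that $S_{n,m}^1 = K_{n-k'-1} \vee \bigl((K_1 \vee j'K_1) \cup (k'-j')K_1\bigr)$, so I would partition $V(S_{n,m}^1)$ into four classes matching this construction: (i) the $n-k'-1$ vertices of the outer $K_{n-k'-1}$; (ii) the unique apex vertex of $K_1 \vee j'K_1$; (iii) the $j'$ leaves of $K_1 \vee j'K_1$; (iv) the $k'-j'$ vertices of $(k'-j')K_1$. A sanity check is that $(n-k'-1)+1+j'+(k'-j')=n$.

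Next I would compute a representative degree in each class using two basic rules: joining with $K_{n-k'-1}$ adds exactly $n-k'-1$ to the degree of every vertex lying outside the $K_{n-k'-1}$ factor, and every vertex of $K_{n-k'-1}$ is universal in $S_{n,m}^1$ and therefore has degree $n-1$. A short case-by-case inspection then yields: vertices in class (i) have degree $n-1$; the apex in class (ii) has degree $j'+(n-k'-1)=n-k'-1+j'$; the leaves in class (iii) have degree $1+(n-k'-1)=n-k'$; and the isolated vertices in class (iv) have degree $0+(n-k'-1)=n-k'-1$.

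Substituting these degrees, weighted by the class sizes, into $M_1(S_{n,m}^1) = \sum_{v\in V(S_{n,m}^1)} d(v)^2$ gives exactly the claimed expression
\begin{equation*}
M_1(S_{n,m}^1) = (n-k'-1+j')^2+j'(n-k')^2+(k'-j')(n-k'-1)^2+(n-k'-1)(n-1)^2.
\end{equation*}
There is no conceptual obstacle here: the proof is essentially a bookkeeping argument, and the only point requiring care is to correctly attribute the contributions of the join and of the inner star to each of the four vertex classes.
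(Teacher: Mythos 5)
Your proposal is correct and matches the paper's proof: both read the degree sequence of $S_{n,m}^1$ off its join/union decomposition (one vertex of degree $n-k'-1+j'$, $j'$ of degree $n-k'$, $k'-j'$ of degree $n-k'-1$, and $n-k'-1$ of degree $n-1$) and then sum the squares. The only difference is that you spell out the bookkeeping for the join a bit more explicitly, which the paper leaves implicit.
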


Lemma~\ref{lemma:Hsum} is in fact a correction of a closed form for $H(G)+H(\overline{G})$ that appeared in~\cite{Gong-2024}. The previous correction does not alter any of the conclusions reached by Gong and Lin.  
\begin{lemma}\label{lemma:Hsum}
For each pair $(n,m)$ in $I$ and for each graph $G$ in $\mathcal{G}_{n,m}$,
\begin{equation}\label{eq:H}
H(G)+H(\overline{G}) = (n-\frac{9}{2})M_1(G)+h(n,m),
\end{equation}
where $h(n,m)=2m^2-6\binom{n}{3}+(n-1)^2\binom{n}{2}-3(n-1)(n-3)m$.
\end{lemma}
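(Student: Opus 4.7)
The plan is to apply Equation~\eqref{eq:H1} to both $G$ and $\overline{G}$, add the two resulting identities, rewrite the right-hand side using the Ramsey-type identities~\eqref{eq:k3}, \eqref{eq:p3}, and~\eqref{eq:p4} until only $p_3(G)$ and elementary functions of $n$ and $m$ remain, and then convert the $p_3(G)$-dependence into an $M_1(G)$-dependence using the paper's already-noted identity $M_1(G) = 2p_3(G) + 2m$.

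More concretely, since $|E(\overline{G})| = \binom{n}{2} - m$, summing \eqref{eq:H1} for $G$ and $\overline{G}$ gives
\[
H(G) + H(\overline{G}) = -3\bigl(k_3(G)+k_3(\overline{G})\bigr) + \bigl(p_4(G)+p_4(\overline{G})\bigr) + 2\bigl(p_3(G)+p_3(\overline{G})\bigr) + \binom{n}{2}.
\]
Substituting \eqref{eq:k3}, \eqref{eq:p3}, and \eqref{eq:p4} eliminates every invariant involving $\overline{G}$, as well as $k_3(G)$ and $p_4(G)$; what survives is a linear combination of $p_3(G)$ with a polynomial in $n$ and $m$. The coefficient of $p_3(G)$ works out to $-3 + 2(n-5) + 4 = 2n - 9$, and replacing $p_3(G)$ by $M_1(G)/2 - m$ turns this piece into $(n - \tfrac{9}{2}) M_1(G) - (2n-9)m$, yielding the required $M_1(G)$-term.

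It then remains to verify that the $p_3(G)$-free remainder equals $h(n,m)$. I would group the pieces by type: the $\binom{n}{3}$-contributions (one from \eqref{eq:k3}, one from \eqref{eq:p4}) merge to $-6\binom{n}{3}$; the $\binom{n}{2}$-contributions, together with the $\binom{n}{2}$ coming from the $+m$ terms in \eqref{eq:H1}, combine via $(n-2)^2 + 2(n-2) + 1 = (n-1)^2$ into $(n-1)^2\binom{n}{2}$; the unique $m^2$-contribution is $2m^2$; and the linear-in-$m$ contributions from four distinct sources (including the $-(2n-9)m$ coming from the $p_3(G)$-substitution) must collapse to $-3(n-1)(n-3)m$, via the factorization $-3n^2 + 12n - 9 = -3(n-1)(n-3)$.

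I do not expect any conceptual obstacle; the result is a symmetric-function identity in $G$ and $\overline{G}$ whose ingredients are all stated in the excerpt. The only step where careful bookkeeping matters is the aggregation of the linear-$m$ coefficient, since it pulls contributions from all three Ramsey-type identities as well as from the $p_3(G)$-substitution. This is presumably also the step in which the closed form previously given in~\cite{Gong-2024} slipped, and why the present statement requires a correction.
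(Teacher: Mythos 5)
Your proposal is correct and follows essentially the same route as the paper's own proof: sum Equation~\eqref{eq:H1} over $G$ and $\overline{G}$, substitute the Ramsey-type identities~\eqref{eq:k3}--\eqref{eq:p4} to isolate a $(2n-9)p_3(G)$ term, and convert it to $(n-\frac{9}{2})M_1(G)-(2n-9)m$ via $2p_3(G)=M_1(G)-2m$ before collecting the linear-in-$m$ coefficient into $-3(n-1)(n-3)$. The bookkeeping you outline (including the $(n-2)^2+2(n-2)+1=(n-1)^2$ aggregation of the $\binom{n}{2}$-terms) matches the paper's computation exactly.
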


The following result is a consequence of Lemma~\ref{lemma:ByerMoptimality} and the Ramsey-type identity for $H(G)$ given in Lemma~\ref{lemma:Hsum}. 

\begin{lemma}\label{lemma:Byer}
For each pair $(n,m)$ in $I$ such that $m\in \{0,1,2,3,\binom{n}{2},\binom{n}{2}-1,\binom{n}{2}-2,\binom{n}{2}-3\}$ it holds that $S_{n,m}^1$ is the only 
$H$-optimal graph in $\mathcal{G}_{n,m}$. 
\end{lemma}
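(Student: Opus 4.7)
The plan is to combine Lemma~\ref{lemma:ByerMoptimality}, which for each of the eight listed values of $m$ restricts the $M$-optimal graphs in $\mathcal{G}_{n,m}$ to at most two isomorphism classes, with Lemma~\ref{lemma:Hsum}, which converts the high-edge question into an equivalent low-edge question via complementation. Since every $H$-optimal graph is by definition $M$-optimal, the task in each case reduces to comparing $H$ on a very short list of candidates.

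First I would dispose of the low-edge cases $m \in \{0,1,2,3\}$. For $m \in \{0,1,2\}$, Lemma~\ref{lemma:ByerMoptimality}(i) says that $S_{n,m}^{1}$ is the only $M$-optimal graph in $\mathcal{G}_{n,m}$, and hence trivially the unique $H$-optimal graph. For $m=3$, Lemma~\ref{lemma:ByerMoptimality}(ii) reduces the $M$-optimal candidates to $S_{n,3}^{1} = K_{1,3} \cup (n-4)K_1$ and $C_{n,3}^{1} = K_3 \cup (n-3)K_1$; the definition $H(G) = M_2(G) - 6k_3(G)$ gives $H(S_{n,3}^{1})=9$ and $H(C_{n,3}^{1})=6$, so $S_{n,3}^{1}$ strictly wins.

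Second, I would reduce the high-edge cases $m \in \{\binom{n}{2},\binom{n}{2}-1,\binom{n}{2}-2,\binom{n}{2}-3\}$ to the previous step by complementation. Set $m' = \binom{n}{2}-m \in \{0,1,2,3\}$. Because $M_1(G)+M_1(\overline{G})$ depends only on $n$ and $m$, the map $G\mapsto\overline{G}$ bijects the $M$-optimal graphs of $\mathcal{G}_{n,m}$ with those of $\mathcal{G}_{n,m'}$. On this common class $M_1$ takes a single value, so Lemma~\ref{lemma:Hsum} says that $H(G)+H(\overline{G})$ is a constant depending only on $n$ and $m$. Hence an $M$-optimal $G \in \mathcal{G}_{n,m}$ maximizes $H$ if and only if $\overline{G}$ minimizes $H$ among $M$-optimal graphs in $\mathcal{G}_{n,m'}$. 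By the low-edge step this minimum is uniquely attained: at the unique $M$-optimal graph when $m' \in \{0,1,2\}$, and at $C_{n,3}^{1}$ (since $6<9$) when $m'=3$. Applying the identity $\overline{C_{n,m'}^{1}}=S_{n,m}^{1}$ stated earlier in the paper identifies the unique $H$-optimal graph in $\mathcal{G}_{n,m}$ as $S_{n,m}^{1}$ in every subcase.

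The main obstacle will be some bookkeeping around $\mathcal{O}_{n,m}$: for the boundary values of $m$ (and of $m'$) one must verify that the auxiliary graphs $S^2,S^3,C^2,C^3$ either fail to exist (their side conditions on $k,j$ or $k',j'$ rule them out) or coincide up to isomorphism with $S^1$ or $C^1$, so that the two candidates being compared really exhaust the $M$-optimal class of $\mathcal{G}_{n,m}$. For example, a direct check from the defining formulas gives $C_{n,3}^{3}\cong S_{n,3}^{1}$ and $S_{n,3}^{3}\cong C_{n,3}^{1}$, and analogous coincidences hold for $m=\binom{n}{2}-3$; similarly, $S_{n,m}^{1}$ and $C_{n,m}^{1}$ become literally the same graph for $m\in\{\binom{n}{2},\binom{n}{2}-1,\binom{n}{2}-2\}$. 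Once these identifications are in place, the whole lemma collapses onto the single explicit inequality $H(K_{1,3})=9>6=H(K_3)$, transported to the other seven values of $m$ by complementation.
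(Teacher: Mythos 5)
Your proposal is correct and follows essentially the same route as the paper: Lemma~\ref{lemma:ByerMoptimality} reduces each case to at most two $M$-optimal candidates, the case $m=3$ is settled by the direct computation $H(S_{n,3}^1)=9>6=H(C_{n,3}^1)$, and the case $m=\binom{n}{2}-3$ is transported back to $m=3$ via Lemma~\ref{lemma:Hsum} together with $\overline{S^1_{n,\binom{n}{2}-3}}=C^1_{n,3}$ and $\overline{C^1_{n,\binom{n}{2}-3}}=S^1_{n,3}$. The only cosmetic difference is that you phrase the high-edge step as minimizing $H$ on complements and also route $m\in\{\binom{n}{2},\binom{n}{2}-1,\binom{n}{2}-2\}$ through complementation, whereas the paper handles those three directly from the uniqueness of the $M$-optimal graph; the substance is identical.
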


The proof of the following result is a simple exercise in combinatorics. 
\begin{lemma}\label{lemma:7}
There exists precisely $7$ pairs $(n,m)$ in $I^*$ such that $n\in \{5,6,7\}$ and further $m\notin \{i,\binom{n}{2}-i: i\in \{0,1,2,3\}\}$, namely, $(5,5)$, $(6,6),(6,7),(6,8),(6,9)$, $(7,9)$, and $(7,12)$. If $(n,m)$ is in $\{(5,5),(6,6),(6,8),(6,9)\}$ then $S_{n,m}^1$ is the only $H$-optimal graph in $\mathcal{G}_{n,m}$, and if 
$(n,m)$ is in $\{(6,7),(7,9),(7,12)\}$ then $S_{n,m}^2$ is the only $H$-optimal graph in $\mathcal{G}_{n,m}$.
\end{lemma}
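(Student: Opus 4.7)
The plan is twofold: first enumerate the pairs in $I^*$ satisfying the stated hypotheses (confirming there are exactly $7$), then for each such pair compare the $H$-values among all graphs of $\mathcal{O}_{n,m}$ to pick out the unique maximizer.

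For the enumeration I compute $k_n$ and $q_n$ from Equations~\eqref{eqkn} and \eqref{eqqn} for $n\in\{5,6,7\}$: $(k_5,q_5)=(3,2)$, $(k_6,q_6)=(4,0)$, $(k_7,q_7)=(5,-4)$, so Lemma~\ref{lemma:Moptimal} applies with case (i) at $n=5$, case (ii) at $n=6$, and case (iii) at $n=7$. For $n=5$, beyond the trivial $m$'s we retain only $m=\tfrac{1}{2}\binom{5}{2}=5$, since the side condition $(2n-3)^2-2(2k_n-3)^2\in\{-1,7\}$ fails (the value is $31$) and $\{\alpha_5,\binom{5}{2}-\alpha_5\}=\{3,7\}$ already lies in the trivial set. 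For $n=6$, because $q_6=0$ we pick up the whole interval $\{\alpha_6,\ldots,\binom{6}{2}-\alpha_6\}=\{6,7,8,9\}$, while $\tfrac{1}{2}\binom{6}{2}=15/2$ is non-integral. For $n=7$, Equation~\eqref{eqRn} gives $R_7=3/2$, so $\tfrac{1}{2}\binom{7}{2}\pm R_7\in\{9,12\}$ while $\tfrac{1}{2}\binom{7}{2}=21/2$ is non-integral. Stripping the trivial values leaves exactly the 7 pairs in the statement.

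For each such pair $(n,m)$, the equality $M_1(C^1_{n,m})=M_1(S^1_{n,m})$ together with parts (ii)--(iv) of Lemma~\ref{lemma:C1S1} forces every graph of $\mathcal{O}_{n,m}$ that exists to be $M$-optimal; hence any $H$-optimal graph lies in $\mathcal{O}_{n,m}$ and can be identified by direct numerical comparison among at most six $H$-values. For each pair I first read off, from the side conditions of their definitions (using the integers $j,k,j',k'$ of Equations~\eqref{eq:kj} and \eqref{eq:kj'}), which of $C^2_{n,m},C^3_{n,m},S^2_{n,m},S^3_{n,m}$ exist; then compute $H(C^1_{n,m})$ via Lemma~\ref{lemma:C1}, and $H(S^1_{n,m})$ by applying Lemma~\ref{lemma:Hsum} to $G=C^1_{n,\binom{n}{2}-m}$ together with the complement identity $\overline{S^1_{n,m}}=C^1_{n,\binom{n}{2}-m}$ (invoking Lemma~\ref{lemma:ZagregSnm1} for the required $M_1$); and finally recover the remaining $H$-values from Equations~\eqref{eqC2}--\eqref{eqS3}. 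A finite arithmetic comparison in each of the 7 cases produces the claimed $H$-optimal graph: $S^1_{n,m}$ for $(n,m)\in\{(5,5),(6,6),(6,8),(6,9)\}$ and $S^2_{n,m}$ for $(n,m)\in\{(6,7),(7,9),(7,12)\}$. The main obstacle is purely bookkeeping: matching the three cases of Lemma~\ref{lemma:Moptimal} to each $n$, checking which of the half-integer candidates $\tfrac{1}{2}\binom{n}{2}$ and $\tfrac{1}{2}\binom{n}{2}\pm R_n$ lie in $\mathbb{Z}$, determining pair-by-pair which auxiliary graphs exist, and executing the 7 small $H$-value comparisons without slips.
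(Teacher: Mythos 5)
Your proposal follows the same two-step route as the paper: enumerate the pairs of $I^*$ with $n\in\{5,6,7\}$ via Lemma~\ref{lemma:Moptimal} (with the same data $q_5=2$, $q_6=0$, $q_7=-4$, $R_7=3/2$, and the same disposal of the non-integral candidates $\frac{1}{2}\binom{n}{2}$), and then reduce $H$-optimality to a comparison of $H$ over $\mathcal{O}_{n,m}$, settling each of the seven cases by arithmetic. The enumeration is correct, and your reduction (equality of $M_1(C^1_{n,m})$ and $M_1(S^1_{n,m})$ plus Lemma~\ref{lemma:C1S1} makes every existing member of $\mathcal{O}_{n,m}$ $M$-optimal) is sound. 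The one point your plan glosses over is the word \emph{only}: in three of the seven cases a purely numerical comparison of the at most six $H$-values ends in a tie at the top. Concretely, $C^2_{5,5}\cong S^1_{5,5}\cong K_1\vee(K_2\cup 2K_1)$, $C^3_{6,7}\cong S^2_{6,7}\cong (K_2\vee 3K_1)\cup K_1$, and $C^3_{7,12}\cong S^2_{7,12}\cong (K_3\vee 3K_1)\cup K_1$, so in each of these cases two of the named graphs attain the maximum $H$-value. To conclude that the $H$-optimal graph is \emph{unique} you must observe that the two maximizers coincide up to isomorphism; the paper makes exactly this observation in each of those cases. With that addition your argument is complete and is essentially the paper's proof, differing only in that you compute $H(S^1_{n,m})$ uniformly through Lemma~\ref{lemma:Hsum} and the complement identity rather than by the ad hoc degree counts the paper uses for several pairs.
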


From now on, we look for $H$-optimal graphs in $\mathcal{G}_{n,m}$ for those pairs $(n,m)$ in $I^* \cap J$. Lemma~\ref{lemma:kandkprime} is a technical result that will be useful to find the only $H$-optimal graph in each pair $(n,m)$ in $I^* \cap J$ when $n$ is sufficiently large (see Lemma~\ref{lemma:437}). The determination of the only $H$-optimal graph in the remaining finitely many pairs $(n,m)$ in $I^*$ will be accomplished by exhaustive computation. 

\begin{lemma}\label{lemma:kandkprime}
Let $(n,m)$ be in $J$. Define the only integers $j$, $k$, $j'$ and $k'$ such that 
$1\leq j\leq k$ and $1\leq j' \leq k'$ satisfying  Equations~\eqref{eq:kj} and \eqref{eq:kj'}. The following bounds for $k$ and $k'$ hold,
\begin{align*}
\frac{n}{\sqrt{2}}-2 &< k <\frac{n}{\sqrt{2}}+1,\\
\frac{n}{\sqrt{2}}-2 &< k' <\frac{n}{\sqrt{2}}+1.
\end{align*}
\end{lemma}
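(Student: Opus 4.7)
The plan is to reduce everything to bounding $k$; the bounds on $k'$ will then follow by a symmetry argument. The key observation is that equation \eqref{eq:kj} together with the constraint $1\leq j\leq k$ is equivalent to the bracket
\[
\binom{k}{2} \leq m < \binom{k+1}{2}.
\]
Setting $m'=\binom{n}{2}-m$ turns equation \eqref{eq:kj'} into $m' = \binom{k'+1}{2}-j'$ with $1\leq j'\leq k'$, so $k'$ plays for $m'$ the same role that $k$ plays for $m$. Since the defining inequality of $J$ is symmetric under the involution $m\mapsto \binom{n}{2}-m$, the pair $(n,m')$ also lies in $J$, so once the bounds are proved for $k$ from $(n,m)\in J$, applying the same argument to $(n,m')$ delivers the bounds for $k'$.

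From $(n,m)\in J$ I would extract two quadratic inequalities in $k$. The strict upper bracket $m < \binom{k+1}{2}$ combined with $m \geq \frac{1}{2}\binom{n}{2}-\frac{n}{2}=\frac{n(n-3)}{4}$ gives $k(k+1) > \frac{n(n-3)}{2}$. The lower bracket $m \geq \binom{k}{2}$ combined with $m\leq \frac{1}{2}\binom{n}{2}+\frac{n}{2}=\frac{n(n+1)}{4}$ gives $k(k-1) \leq \frac{n(n+1)}{2}$. These two inequalities are precisely what is needed.

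For the upper bound $k < \frac{n}{\sqrt{2}}+1$, I argue by contradiction: assuming $k \geq \frac{n}{\sqrt{2}}+1$ yields $k(k-1) \geq \bigl(\frac{n}{\sqrt{2}}+1\bigr)\frac{n}{\sqrt{2}} = \frac{n^2}{2}+\frac{n}{\sqrt{2}}$, which strictly exceeds $\frac{n(n+1)}{2}=\frac{n^2}{2}+\frac{n}{2}$ because $\sqrt{2}<2$; this contradicts $k(k-1)\leq \frac{n(n+1)}{2}$. For the lower bound $k > \frac{n}{\sqrt{2}}-2$, I would assume $k\leq \frac{n}{\sqrt{2}}-2$, whence $k(k+1) \leq \bigl(\frac{n}{\sqrt{2}}-2\bigr)\bigl(\frac{n}{\sqrt{2}}-1\bigr) = \frac{n^2}{2}-\frac{3n}{\sqrt{2}}+2$, and show this is at most $\frac{n(n-3)}{2}$. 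The inequality to verify reduces to $\frac{3n(\sqrt{2}-1)}{2}\geq 2$, i.e., $n\geq \frac{4(\sqrt{2}+1)}{3}\approx 3.22$, which is amply satisfied for $n\geq 8$.

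There is no substantive obstacle: the argument is a short chain of elementary inequalities. The only care needed is to track strict vs.\ non-strict inequalities when combining the bracket $\binom{k}{2}\leq m<\binom{k+1}{2}$ with the defining inequalities of $J$, so that the contradictions drawn in each direction are genuine.
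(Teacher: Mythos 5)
Your proposal is correct and follows essentially the same route as the paper: both arguments reduce the bracket $\binom{k}{2}\leq m<\binom{k+1}{2}$ (equivalently $m=\binom{k+1}{2}-j$ with $1\leq j\leq k$) against the defining inequalities of $J$ and exploit monotonicity of $x\mapsto x(x+1)$, handling $k'$ by the same symmetry $m\mapsto\binom{n}{2}-m$. The only cosmetic difference is that you phrase the two bounds as proofs by contradiction where the paper argues directly.
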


Define the polynomials $p(x)$ and $q(x)$ as follows, 
\begin{align}
p(x) &= \frac{1}{8}\left((3-2\sqrt{2})x^4-(20+37\sqrt{2})x^3 + (19+76\sqrt{2})x^2-(46+226\sqrt{2})x+72\right),\label{eq:p}\\
q(x) &= \frac{1}{8}\left( (2\sqrt{2})x^3-2x^2-16\sqrt{2}x+4\right). \label{eq:q}
\end{align}

\begin{lemma}\label{lemma:bounds}
Let $(n,m)$ be in $J$. All the following assertions hold.   
\begin{enumerate}[label=(\roman*)]
\item\label{i} $H(C_{n,m}^1)-H(S_{n,m}^1) \geq p(n)$.
\item\label{ii} For each pair of integers $i$ and $j$ in $\{1,2,3\}$ it holds that $|H(S_{n,m}^i)-H(S_{n,m}^j)|\leq q(n)$.
\end{enumerate}
\end{lemma}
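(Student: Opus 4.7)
The plan splits along the two parts of the lemma. For part (i), the idea is to obtain an explicit polynomial formula for $H(C_{n,m}^1) - H(S_{n,m}^1)$ and then bound it below by $p(n)$ over the admissible range.

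I would start from Lemma~\ref{lemma:C1}, which provides $H(C_{n,m}^1)$ as a polynomial in $n, j, k$. To handle $H(S_{n,m}^1)$, I would apply the Ramsey-type identity of Lemma~\ref{lemma:Hsum} to $G = S_{n,m}^1$; since $\overline{S_{n,m}^1} = C_{n, \binom{n}{2}-m}^1$ and $\binom{n}{2}-m = \binom{k'+1}{2}-j'$ by Equation~\eqref{eq:kj'}, this yields
$$H(S_{n,m}^1) = (n - \tfrac{9}{2})M_1(S_{n,m}^1) + h(n,m) - H(C_{n, \binom{n}{2}-m}^1),$$
where $M_1(S_{n,m}^1)$ is given by Lemma~\ref{lemma:ZagregSnm1} and $H(C_{n, \binom{n}{2}-m}^1)$ is Lemma~\ref{lemma:C1} with $(j, k)$ replaced by $(j', k')$. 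Subtracting produces an explicit polynomial $\Delta(n, k, j, k', j')$ equal to $H(C_{n,m}^1) - H(S_{n,m}^1)$, subject to the coupling $\binom{k+1}{2} - j + \binom{k'+1}{2} - j' = \binom{n}{2}$ and to $1 \leq j \leq k$, $1 \leq j' \leq k'$. For $(n,m) \in J$, Lemma~\ref{lemma:kandkprime} confines both $k$ and $k'$ to $(n/\sqrt{2}-2, n/\sqrt{2}+1)$, so the minimum of $\Delta$ over the admissible domain is a function of $n$ alone. The leading $n^4$-coefficient of $\Delta$ when $k, k' \approx n/\sqrt{2}$ computes to exactly $(3-2\sqrt{2})/8$, matching $p(n)$; the lower-order coefficients of $p(n)$ are the pessimistic values of the subleading terms obtained by pushing $k$ and $k'$ to their extremal integer values and $j, j'$ to their admissible endpoints. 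The final step is a polynomial inequality in $n$ valid for all $n \geq 8$.

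For part (ii), Equations~\eqref{eqS2} and~\eqref{eqS3} give, whenever the corresponding graphs exist, $H(S_{n,m}^2) - H(S_{n,m}^1) = (k' - \tfrac{7}{2})(k'-j')(k'-j'-1)$ and $H(S_{n,m}^3) - H(S_{n,m}^1) = -3$. By Lemma~\ref{lemma:kandkprime}, for $(n,m) \in J$ with $n \geq 8$ we have $k' > 8/\sqrt{2} - 2 > 3$, so $k' \geq 4$ and the first difference is non-negative; the maximum of $(k'-j')(k'-j'-1)$ over $j' \in \{1, \ldots, k'\}$ occurs at $j' = 1$ and equals $(k'-1)(k'-2)$. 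Substituting the upper bound $k' < n/\sqrt{2}+1$ and expanding reduces the bound for the pair $(i, j) = (2, 3)$ to the polynomial inequality $(k'-\tfrac{7}{2})(k'-1)(k'-2) + 3 \leq q(n)$, which is a direct verification; the remaining pairs $(i, j)$ then follow by the triangle inequality and the identity $|H(S_{n,m}^3) - H(S_{n,m}^1)| = 3$.

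The principal obstacle is the bookkeeping in part (i): the expression $\Delta$ is quartic in $n$ with auxiliary variables $j, k, j', k'$ coupled nontrivially through Equations~\eqref{eq:kj} and~\eqref{eq:kj'}, and identifying the minimum over the admissible integer domain so that it precisely matches $p(n)$ requires careful case analysis on the extremal positions of $(j, k)$ and $(j', k')$ rather than a routine optimization of the continuous relaxation.
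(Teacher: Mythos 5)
Your proposal follows essentially the same route as the paper's appendix proof: for part (i) you invoke Lemma~\ref{lemma:Hsum} through $\overline{S_{n,m}^1}=C_{n,\binom{n}{2}-m}^1$ together with Lemma~\ref{lemma:C1}, Lemma~\ref{lemma:ZagregSnm1} and the bounds on $k$ and $k'$ from Lemma~\ref{lemma:kandkprime}, and for part (ii) you bound $(k'-\frac{7}{2})(k'-j')(k'-j'-1)$ using $k'<\frac{n}{\sqrt{2}}+1$ and finish with the triangle inequality, exactly as the paper does. The one simplification worth noting is that you do not need the minimum of your expression $\Delta$ over the admissible integer domain to \emph{equal} $p(n)$ (the ``careful case analysis'' you anticipate): the paper merely takes termwise pessimistic, not jointly achievable, bounds on $H(C_{n,m}^1)$, $M_1(S_{n,m}^1)$ and $h(n,m)$ separately, which already yields $p(n)$ as a valid lower bound for the stated inequality.
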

Define the polynomial $r(x)$ as $p(x)-q(x)$, i.e., $r(x) = p(x)-q(x)$. 
\begin{remark}\label{remark:r}
By Sturm's theorem it can be easily proved that $r(x)$ has exactly one root in the interval $(436,437]$, which is the greatest real root of $r(x)$. As the leading coefficient of $r(x)$ is positive we conclude that $r(x)>0$ when $x\geq 437$.  
\end{remark}
Lemma~\ref{lemma:437} is a consequence of Lemma~\ref{lemma:bounds} and Remark~\ref{remark:r} (see the Appendix for details).
\begin{lemma}\label{lemma:437}
Let $(n,m)$ be in $I^* \cap J$ such that $n\geq 437$. The following assertions hold.
\begin{enumerate}[label=(\roman*)]
\item\label{part1} If $C_{n,m}^3$ exists then $C_{n,m}^3$ is the only $H$-optimal graph in $\mathcal{G}_{n,m}$.
\item\label{part2} If $C_{n,m}^3$ does not exist then $C_{n,m}^1$ is the only $H$-optimal graph in $\mathcal{G}_{n,m}$.
\end{enumerate}
\end{lemma}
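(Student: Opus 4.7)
The plan is to reduce the search for an $H$-optimal graph to the (at most six element) set $\mathcal{O}_{n,m}$, and then combine the quantitative separation given by Lemma~\ref{lemma:bounds} with the qualitative chain of Lemma~\ref{lemma:chains}(ii). All the heavy lifting has already been packaged into $p(n)$, $q(n)$, and the root localization of $r(n)$ in Remark~\ref{remark:r}; the only real danger here is a bookkeeping mistake about which members of $\mathcal{O}_{n,m}$ happen to exist for the particular pair $(n,m)$, so I expect no substantive obstacle.

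\textbf{Reduction to $\mathcal{O}_{n,m}$.} Since $(n,m) \in I^*$, both $C_{n,m}^1$ and $S_{n,m}^1$ are $M$-optimal. Parts (iii) and (iv) of Lemma~\ref{lemma:C1S1} then imply that every existing member of $\mathcal{O}_{n,m}$ is $M$-optimal, and part (i) of the same lemma ensures conversely that every $M$-optimal graph in $\mathcal{G}_{n,m}$ lies in $\mathcal{O}_{n,m}$. Because $H$-optimal graphs are $M$-optimal by Definition~\ref{def:Hoptimal}, the $H$-optimal graphs in $\mathcal{G}_{n,m}$ are exactly those members of $\mathcal{O}_{n,m}$ that maximize $H$.

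\textbf{Separating $C_{n,m}^1$ from every $S$-type graph.} For each $i \in \{1,2,3\}$ such that $S_{n,m}^i$ exists, I would combine parts \ref{i} and \ref{ii} of Lemma~\ref{lemma:bounds} via the telescoping identity
\[
H(C_{n,m}^1) - H(S_{n,m}^i) = \bigl(H(C_{n,m}^1) - H(S_{n,m}^1)\bigr) + \bigl(H(S_{n,m}^1) - H(S_{n,m}^i)\bigr) \geq p(n) - q(n) = r(n).
\]
By Remark~\ref{remark:r}, $r(n) > 0$ whenever $n \geq 437$, so $H(C_{n,m}^1) > H(S_{n,m}^i)$ for every $S$-type graph in $\mathcal{O}_{n,m}$.

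\textbf{Exploiting the $C$-chain and concluding.} Lemma~\ref{lemma:chains}(ii) supplies the strict chain $H(C_{n,m}^3) > H(C_{n,m}^1) > H(C_{n,m}^2)$, valid whenever the indicated graphs exist (note $m > 5$ is automatic under our hypothesis). If $C_{n,m}^3$ exists, this chain combined with the previous step yields $H(C_{n,m}^3) > H(G)$ for every other $G \in \mathcal{O}_{n,m}$, establishing part~\ref{part1}. If $C_{n,m}^3$ does not exist, the chain (applied to $C_{n,m}^2$ only when present) together with the separation of the previous paragraph yields $H(C_{n,m}^1) > H(G)$ for every other $G \in \mathcal{O}_{n,m}$, establishing part~\ref{part2}.
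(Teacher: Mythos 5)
Your proposal is correct and follows essentially the same route as the paper's proof: reduce to $\mathcal{O}_{n,m}$, separate $C_{n,m}^1$ from the $S$-type graphs via the telescoping bound $p(n)-q(n)=r(n)>0$ from Lemma~\ref{lemma:bounds} and Remark~\ref{remark:r}, and finish with the chain of Lemma~\ref{lemma:chains}\ref{chains2}. Your explicit justification that every existing member of $\mathcal{O}_{n,m}$ is $M$-optimal when $(n,m)\in I^*$ (via Lemma~\ref{lemma:C1S1}) is a small point the paper leaves implicit, but the argument is otherwise identical.
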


Let $n$ and $m$ be fixed integers such that $(n,m)$ is in $I^*$. 
Recall that each $M$-optimal graph in $\mathcal{G}_{n,m}$ is in the set $\mathcal{O}_{n,m}$ which has at most $6$ graphs. 
By Lemma~\ref{lemma:C1} and Lemma~\ref{lemma:Hsum} we can find both 
$H(C_{n,m}^1)$ and $H(S_{n,m}^1)$. Furthermore, 
by means of Equations~\eqref{eqC2}, \eqref{eqC3}, \eqref{eqS2}, and \eqref{eqS3}, 
we can find $H(G)$ for each of the graphs in $\mathcal{O}_{n,m}$ and determine which 
graphs are $H$-optimal in $\mathcal{G}_{n,m}$. 
\begin{remark}
Computations among the finitely many pairs $(n,m)$ in $I^*\cap J$ such that $8\leq n \leq 436$ show that the same conclusions of Lemma~\ref{lemma:437} hold for all pairs $(n,m)$ in $I^* \cap J$. 
\end{remark}
 
Even though the determination of $H$-optimal graphs was carried out for all classes $\mathcal{G}_{n,m}$ such that $(n,m)$ is in $I$, the determination of $H$-optimal graphs in each of the nonempty classes $\mathcal{G}_{n,m}$ such that $(n,m)$ is not in $I$ is  elementary.

\begin{remark}\label{remark:trivial}
If $\mathcal{G}_{n,m}$ is nonempty but $(n,m)$ is not in $I$ then $S_{n,m}^1$ is the only $H$-optimal graph in $\mathcal{G}_{n,m}$. In fact, if $n\in \{1,2,3\}$ then $\mathcal{G}_{n,m}=\{S_{n,m}^1\}$ and the result trivially holds. Otherwise, $n=4$, and Lemma~\ref{lemma:ByerMoptimality} gives that $S_{n,m}^1$ is the only $H$-optimal graph in $\mathcal{G}_{n,m}$.
\end{remark}

We are in position to characterize the set consisting of all $H$-optimal graphs. 

\begin{theorem}\label{theorem:unique}
In each nonempty set $\mathcal{G}_{n,m}$ there exists a unique $H$-optimal graph, that we denote $H_{n,m}$. Additionally, all the following assertions hold.
\begin{enumerate}[label=(\roman*)]
\item If $(n,m)\notin I$ then $H_{n,m}=S_{n,m}^1$. 
\item If $(n,m)\in I^+$ and $S_{n,m}^2$ does (does not) exist then $H_{n,m}=S_{n,m}^2$ (resp. $H_{n,m}=S_{n,m}^1$).
\item If $(n,m)\in I^-$ and $C_{n,m}^3$ does (does not) exist then $H_{n,m}=C_{n,m}^3$ 
(resp. $H_{n,m}=C_{n,m}^1$).  
\item If $(n,m) \in I^*$ and $(n,m)$ is in $\{(5,5),(6,6),(6,8),(6,9)\}$ then $H_{n,m}=S_{n,m}^1$.
\item If $(n,m) \in I^*$ and $(n,m)$ is in $\{(6,7),(7,9),(7,12)\}$ then $H_{n,m}=S_{n,m}^2$.
\item If $(n,m)\in I^*$ and $m\in \{i,\binom{n}{2}-i: i\in \{0,1,2,3\}\}$ then $H_{n,m}=S_{n,m}^1$.
\item If $(n,m)\in I^* \cap J$ and $C_{n,m}^3$ does (does not) exist then $H_{n,m}=C_{n,m}^3$ (resp. $H_{n,m}=C_{n,m}^1$). 
\end{enumerate}
\end{theorem}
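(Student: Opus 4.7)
The plan is a case analysis that partitions the parameter space into the seven disjoint cases of the statement and, in each case, invokes the preceding lemma that identifies a specific member of $\mathcal{O}_{n,m}$ as the unique $H$-optimal graph. Since every case in the statement already names a specific graph in $\mathcal{O}_{n,m}$, uniqueness and existence follow together, provided the case partition is exhaustive.

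For the pairs $(n,m)\notin I$, Remark~\ref{remark:trivial} gives case (i) directly, yielding $H_{n,m}=S_{n,m}^1$ either because $|\mathcal{G}_{n,m}|=1$ when $n\in\{1,2,3\}$, or by Lemma~\ref{lemma:ByerMoptimality} when $n=4$. For pairs in $I^+\cup I^-$, the work of Gong and Lin packaged as Lemma~\ref{lemma:Gong-H} selects the unique $H$-optimal graph from $\mathcal{O}_{n,m}$ according to whether $S_{n,m}^2$ (respectively $C_{n,m}^3$) exists, giving cases (ii) and (iii).

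The remaining work is to treat $I^*$. I would partition it into three subcases. First, when $m\in\{i,\binom{n}{2}-i : i\in\{0,1,2,3\}\}$, Lemma~\ref{lemma:Byer} gives $H_{n,m}=S_{n,m}^1$, covering case (vi). Second, when $n\in\{5,6,7\}$ and $m$ falls outside that extreme range, Lemma~\ref{lemma:7} enumerates exactly seven exceptional pairs and splits them into the four of case (iv) and the three of case (v). Third, for every remaining pair I would use Lemma~\ref{lemma:different} to force $m$ into the middle window $[\tfrac{1}{2}\binom{n}{2}-\tfrac{n}{2},\tfrac{1}{2}\binom{n}{2}+\tfrac{n}{2}]$, placing the pair in $I^*\cap J$ with $n\geq 8$; Lemma~\ref{lemma:437} resolves the subrange $n\geq 437$, while the Remark immediately after it records that an exhaustive computation over the finitely many pairs with $8\leq n\leq 436$ in $I^*\cap J$ confirms the same dichotomy, establishing case (vii).

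The main obstacle is administrative rather than technical: one must confirm that the seven cases of the statement are mutually exclusive and jointly cover every nonempty $\mathcal{G}_{n,m}$. Mutual exclusion follows from the definitions of $I$, $I^+$, $I^-$, $I^*$ together with the explicit finite exceptional lists of cases (iv) and (v). Joint coverage reduces, via Lemma~\ref{lemma:different}, to the statement that a pair in $I^*$ with $n\geq 8$ lying outside the extreme range of case (vi) must belong to $J$. Once this bookkeeping is carried out, the rest of the proof is a mechanical concatenation of the named lemmas and requires no new computation in the theorem itself; all substantive content is absorbed into the preceding results and their appendix proofs.
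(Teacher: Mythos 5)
Your proposal is correct and follows essentially the same route as the paper, which assembles Theorem~\ref{theorem:unique} from Remark~\ref{remark:trivial}, Lemma~\ref{lemma:Gong-H}, Lemma~\ref{lemma:Byer}, Lemma~\ref{lemma:7}, Lemma~\ref{lemma:437} together with the computational remark, and uses Lemma~\ref{lemma:different} exactly as you do to show the remaining pairs of $I^*$ lie in $J$. The exhaustiveness bookkeeping you flag is precisely the content of the paper's ``proof strategy'' paragraph in Section~\ref{section:main}.
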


We are ready to prove the main result of this article.
\begin{theorem}\label{theorem:main}
In each set $T_{n,m}$ such that $n\geq 4$ and $5\leq m \leq \binom{n}{2}$ there exists a unique LMRTTG. 
Additionally, the following assertions hold.
\begin{enumerate}[label=(\roman*)]
\item\label{teo1} If $5\leq m\leq 2n-3$ then $G_{n,m}$ given in Definition~\ref{definition:Hnm} is the only $LMRTTG$ in $T_{n,m}$.
\item\label{teo2} If $2n-3 < m \leq \binom{n}{2}$ then the two-terminal graph $G_{n,m}$ in $T_{n,m}$ whose terminals are universal such that $G_{n,m}-s-t$ is $H_{n-2,m-(2n-3)}$ is the only LMRTTG in $T_{n,m}$. 
\end{enumerate}
\end{theorem}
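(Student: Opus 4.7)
The plan is to split the argument into the two ranges of $m$ indicated in the statement and reduce everything to results already established earlier in the paper. For the low range $5 \leq m \leq 2n-3$, nothing new is needed: Lemma~\ref{lemma:onlyH} states verbatim that $G_{n,m}$ from Definition~\ref{definition:Hnm} is the only LMRTTG in $T_{n,m}$. So part~\ref{teo1} is immediate.

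For part~\ref{teo2}, with $2n-3 < m \leq \binom{n}{2}$, I would first check that the pair $(n-2,\, m-(2n-3))$ lies in the regime where Theorem~\ref{theorem:unique} applies. Indeed $m - (2n-3) \geq 1$ since $m > 2n-3$, and $m - (2n-3) \leq \binom{n}{2} - (2n-3) = \binom{n-2}{2}$, so $\mathcal{G}_{n-2,\, m-(2n-3)}$ is nonempty and Theorem~\ref{theorem:unique} yields a unique $H$-optimal graph $H_{n-2,\, m-(2n-3)}$. Next, I define $G_{n,m}$ to be the two-terminal graph on $n$ vertices whose terminals $s,t$ are universal and such that $G_{n,m} - s - t = H_{n-2,\, m-(2n-3)}$; this graph has exactly $m$ edges because making both terminals universal contributes $2(n-2) + 1 = 2n-3$ edges and the rest come from $\hat{G}_{n,m}$.

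Now I would apply Lemma~\ref{lemma:Hoptimality}: a two-terminal graph $G \in T_{n,m}$ is in $T_{n,m}^{(5)}$ if and only if both terminals of $G$ are universal and $\hat{G} = G - s - t$ is $H$-optimal in $\mathcal{G}_{n-2,\, m-(2n-3)}$. Since Theorem~\ref{theorem:unique} asserts that $H_{n-2,\, m-(2n-3)}$ is the \emph{unique} $H$-optimal graph in that class, the only two-terminal graph in $T_{n,m}$ meeting both conditions is $G_{n,m}$; that is, $T_{n,m}^{(5)} = \{G_{n,m}\}$. Finally, Remark~\ref{remark:singleton} turns this singleton into the conclusion: $G_{n,m}$ is the unique LMRTTG in $T_{n,m}$, which is part~\ref{teo2}.

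In this layout there is no genuine obstacle left at this stage: all of the heavy lifting lives in Theorem~\ref{theorem:unique} (the existence and uniqueness of the $H$-optimal graph) and in Lemma~\ref{lemma:Hoptimality} (the translation of LMRTTG-optimality into $H$-optimality of the complement of the terminals). The only small point to be careful about is bookkeeping: verifying that $G_{n,m}$ as constructed really lies in $T_{n,m}$ (i.e.\ has exactly $n$ vertices and $m$ edges), and that the case split $m = 2n-3$ falls under part~\ref{teo1} so that the two parts together cover the full range $5 \leq m \leq \binom{n}{2}$ without overlap or gap.
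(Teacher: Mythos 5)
Your proposal is correct and follows essentially the same route as the paper: part~\ref{teo1} is read off from Lemma~\ref{lemma:onlyH}, and part~\ref{teo2} combines Theorem~\ref{theorem:unique} (uniqueness of the $H$-optimal graph in $\mathcal{G}_{n-2,\,m-(2n-3)}$) with Lemma~\ref{lemma:Hoptimality} to get $T_{n,m}^{(5)}=\{G_{n,m}\}$, then invokes Remark~\ref{remark:singleton}. The bookkeeping you flag (edge count $2n-3$ from the universal terminals, nonemptiness of $\mathcal{G}_{n-2,\,m-(2n-3)}$) matches what the paper verifies.
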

\begin{proof}
It is enough to prove Assertions~\ref{teo1} and \ref{teo2}. On the one hand, Assertion~\ref{teo1} is Lemma~\ref{lemma:onlyH}. On the other hand, let $n$ and $m$ be integers such that 
$n\geq 4$ and $2n-3<m \leq \binom{n}{2}$. Let $n'=n-2$ and $m'=m-(2n-3)$. As $n\geq 4$ and $2n-3<m\leq \binom{n}{2}$, we get that $n'\geq 2$ and $0\leq m' \leq \binom{n}{2}-(2n-3) = \binom{n-2}{2} = \binom{n'}{2}$. Therefore, $\mathcal{G}_{n',m'}$ is a nonempty set of simple graphs. By Theorem~\ref{theorem:unique}, there exists a unique $H$-optimal graph in $\mathcal{G}_{n',m'}$, that we call $H_{n',m'}$. Define the only two-terminal graph $G_{n,m}$ in $T_{n,m}$ such that its terminals $s$ and $t$ are universal vertices in $G_{n,m}$ and further $G_{n,m}-s-t=H_{n',m'}$. As $H_{n',m'}$ is the only $H$-optimal graph 
in $\mathcal{G}_{n',m'}$, by Lemma~\ref{lemma:Hoptimality} we know that $T_{n,m}^{(5)}=\{G_{n,m}\}$. Applying 
Remark~\ref{remark:singleton} with $j=5$ we get that $G_{n,m}$ is the only LMRTTG in $T_{n,m}$ thus proving Assertion~\ref{teo2} as required.  
\end{proof}

\section*{Acknowledgments}
The author wants to thank Prof. Mart\'in Safe for his insightful comments that improved the presentation of this manuscript. The author also wants to thank Prof. Claudio Qureshi who kindly provided me with a proof that the denominator that appears on the right-hand of Equation~\eqref{eqRn} never gets null for any choice of integer $n$ such that $n\geq 5$.

\section{Appendix}
\emph{Proof of Lemma~\ref{lemma:C1}}: let $n$ and $m$ be integers such that $n\geq 1$ and $0\leq m\leq \binom{n}{2}$. Define the only integers $k$ and $j$ satisfying Equation~\eqref{eq:kj} such that $1\leq j\leq k$. The quasi-star graph $C_{n,m}^1$ arises  from $K_{k}$ by the addition of one vertex joined to precisely $k-j$ of the vertices in $K_k$, and $n-k-1$ isolated vertices. Let $v_1,v_2,\ldots,v_k$ be the $k$ vertices in $K_k$ and let $v_{k+1}$ be the vertex that is joined precisely to $v_{1},v_{2},\ldots,v_{k-j}$. 
Define $V_1=\{v_1,v_2,\ldots,v_{k-j}\}$ and $V_{2}=\{v_{k-j+1},v_{k-j+2},\ldots,v_{k}\}$. The degree of each vertex in $V_1$ or $V_2$ equals $k$ or $k-1$ respectively, while the the degree of 
$v_{k+1}$ equals $k-j$. 
The edge set of $C_{n,m}^1$ can be partitioned into precisely $4$ sets: the edge set $E_1$ whose endpoint are both in $V_1$; the edge set $E_2$ whose endpoints are both in $V_2$; the edge set $E_3$ such that one endpoint is in $V_1$ and one endpoint is in $V_2$; and the edge set $E_4$ such that one endpoint is in $V_1$ and one endpoint is $v_{k+1}$. Clearly, $E(C_{n,m}^1)=E_1\cup E_2 \cup E_3 \cup E_4$. The size of each edge set is $|E_1|=\binom{k-j}{2}$, $|E_2|=\binom{j}{2}$, $|E_3|=j(k-j)$, and  $|E_4|=k-j$. On the one hand, 
\begin{align*}
M_2(C_{n,m}^1) &= \sum_{vw\in E_1}d(v)d(w)+\sum_{vw\in E_2}d(v)d(w)+
\sum_{vw\in E_3}d(v)d(w)+\sum_{vw\in E_4}d(v)d(w)\\
&= \binom{k-j}{2} k^2 + \binom{j}{2}(k-1)^2 + j(k-j)k(k-1) + k(k-j)^2\\
&= \frac{1}{2}k^2(k-j)(k-j-1)+\frac{1}{2}j(j-1)(k-1)^2+j(k-j)k(k-1)+k(k-j)^2\\
&= \frac{1}{2}k^2(k^2-(2j+1)k+j(j+1))+\frac{1}{2}j(j-1)(k^2-2k+1)\\
&+jk(k^2-(j+1)k+j)+k(k^2-2jk+j^2)\\
&= \frac{1}{2}k^4 +\frac{1}{2}k^3 -3jk^2 + (j+j^2)k + \frac{1}{2}j(j-1). 
\end{align*}
On the other hand, each triangle in $C_{n,m}^1$ has either its $3$ vertices in $V_1 \cup V_2$ or one of its vertices is $v_{k+1}$ and the other $2$ vertices are in $V_1$ thus 
\begin{align*}
k_3(C_{n,m}^1) &= \binom{k}{3}+\binom{k-j}{2} = \frac{1}{6}(k(k-1)(k-2)+3(k-j)(k-j-1))\\
&= \frac{1}{6}(k^3-3k^2+2k+3k^2-3(2j+1)k+3j(j+1))= \frac{1}{6}(k^3-(6j+1)k+ 3j(j+1)).
\end{align*}
Therefore, $H(C_{n,m}^1)=M_2(C_{n,m}^1)-6k_3(C_{n,m}^1)=\frac{1}{2}k^4-\frac{1}{2}k^3-3jk^2 + (j^2+7j+1)k-\frac{5j^2+7j}{2}$. \qed\\

\emph{Proof of Lemma~\ref{lemma:ZagregSnm1}}: let $n$ and $m$ be integers such that $n\geq 1$ and $0\leq m \leq \binom{n}{2}$. Define the only integers $k'$ and $j'$ satisfying  Equation~\eqref{eq:kj'} such that $1\leq j'\leq k'$. Recall that $S_{n,m}^1=K_{n-k'-1} \vee ( (K_1 \vee j'K_1) \cup (k'-j')K_1)$. Then, $S_{n,m}^1$ has precisely $1$ vertex with degree $n-k'-1+j'$; $j'$ vertices with degree $n-k'$; $k'-j'$ vertices with degree $n-k'-1$; and $n-k'-1$ vertices with degree $n-1$. By the definition of the first Zagreb index,
\begin{equation*}
M_1(S_{n,m}^1) = (n-k'-1+j')^2+j'(n-k')^2+(k'-j')(n-k'-1)^2+(n-k'-1)(n-1)^2. \qed    
\end{equation*}

\emph{Proof of Lemma~\ref{lemma:Hsum}}: let $(n,m)$ be in $I$ and let $G$ be any graph in $\mathcal{G}_{n,m}$. Applying Equation~\eqref{eq:H1} to both graphs $G$ and $\overline{G}$ yields
\begin{align*}
H(G) &= -3k_3(G)+p_4(G)+2p_3(G)+m;\\
H(\overline{G}) &= -3k_3(\overline{G})+p_4(\overline{G})+2p_3(\overline{G})+(\binom{n}{2}-m).
\end{align*}
Now, summing the previous expressions and using Equations~\eqref{eq:k3}, \eqref{eq:p3}, and \eqref{eq:p4} yields
\begin{align*}
H(G)+H(\overline{G}) &= -3(k_3(G)+k_3(\overline{G}))+(p_4(G)+p_4(\overline{G}))+2(p_3(G)+p_3(\overline{G})) + \binom{n}{2}\\
&= -3(\binom{n}{3}-m(n-2) + p_3(G))\\
&+ 2(n-5)p_3(G)+2m^2-8m+3mn-3\binom{n}{3}+(n-2)^2(\binom{n}{2}-3m)\\
&+ 4p_3(G)+2(n-2)(\binom{n}{2}-2m)) + \binom{n}{2}\\
&= (2n-9)p_3(G)+2m^2-6\binom{n}{3}+(n-1)^2\binom{n}{2}-(3n^2-14n+18)m,
\end{align*}
Now, observe that $p_3(G)$ counts the number of pairs of adjacent edges in $G$. Therefore, $p_3(G)=\sum_{i=1}^{n}\binom{d_i(G)}{2}$ and $2p_3(G)=\sum_{i=1}^{n}d_i(G)(d_i(G)-1) = M_1(G)-2m$. Consequently, $(2n-9)p_3(G)=(n-\frac{9}{2})(M_1(G)-2m)=(n-\frac{9}{2})M_1(G)-2mn+9m$, and 
\begin{align*}
H(G)+H(\overline{G}) &= (2n-9)p_3(G)+2m^2-6\binom{n}{3}+(n-1)^2\binom{n}{2}-(3n^2-14n+18)m\\
&= (n-\frac{9}{2})M_1(G)+2m^2-6\binom{n}{3}+(n-1)^2\binom{n}{2}-(3n^2-12n+9)m\\
&= (n-\frac{9}{2})M_1(G) + 2m^2-6\binom{n}{3}+(n-1)^2\binom{n}{2}-3(n-1)(n-3)m\\  
&= (n-\frac{9}{2})M_1(G) + h(n,m),
\end{align*}
where $h(n,m)=2m^2-6\binom{n}{3}+(n-1)^2\binom{n}{2}-3(n-1)(n-3)m$, as required.
\qed\\

\emph{Proof of Lemma~\ref{lemma:Byer}}: let $(n,m)$ be in $I$. If $m\in \{0,1,2,\binom{n}{2},\binom{n}{2}-1,\binom{n}{2}-2\}$ then Lemma~\ref{lemma:ByerMoptimality} gives that $S_{n,m}^1$ is the only $M$-optimal graph in $\mathcal{G}_{n,m}$ thus it is the only $H$-optimal graph in $\mathcal{G}_{n,m}$. If $m=3$ then Lemma~\ref{lemma:ByerMoptimality} gives that the only $M$-optimal graphs in $\mathcal{G}_{n,m}$ are $S_{n,m}^1$ or $C_{n,m}^1$. As $m=3$, the quasi-star graph $S_{n,m}^1$ equals the star $K_{1,3}$ plus $n-4$ isolated vertices, while the quasi-complete graphs $C_{n,m}^1$ equals the complete graph $C_3$ plus $n-4$ isolated vertices. On the one hand, as $S_{n,m}^1$ has precisely $3$ edges each one whose endpoint vertices have degrees $1$ and $3$, we get that $M_2(S_{n,m}^1)=3+3+3=9$. As $S_{n,m}^1$ has no triangles, $H(S_{n,m}^1)=M_2(S_{n,m}^1)=9$. On the other hand, as $C_{n,m}^1$ has precisely $3$ edges each one whose endpoint vertices have degrees $2$, we get that $M_2(C_{n,m}^1)=3\times 2^2=12$. As $C_{n,m}^1$ has precisely $1$ triangle, $H(C_{n,m}^1)=12-6=6$. Consequently, $S_{n,m}^1$ is the only $H$-optimal graph in $\mathcal{G}_{n,m}$. Finally, if $m=\binom{n}{2}-3$ then 
Lemma~\ref{lemma:ByerMoptimality} gives that  $C_{n,m}^1$ and $S_{n,m}^1$ are the only $M$-optimal graphs in $\mathcal{G}_{n,m}$. Then, $M_1(C_{n,m}^1)=M_1(S_{n,m}^1)$, and by Lemma~\ref{lemma:Hsum},  
$H(S_{n,\binom{n}{2}-3}^1)+H(\overline{S_{n,\binom{n}{2}-3}^1})=H(C_{n,\binom{n}{2}-3}^1)+H(\overline{C_{n,\binom{n}{2}-3}^1})$. Using that 
$\overline{S_{n,\binom{n}{2}-3}^1}=C_{n,3}^1$ and  
$\overline{C_{n,\binom{n}{2}-3}^1}=S_{n,3}^1$, we conclude that 
$H(S_{n,\binom{n}{2}-3}^1)-H(C_{n,\binom{n}{2}-3}^1)=H(S_{n,3}^1)-H(C_{n,3}^1)=3$ and 
$S_{n,m}^1$ is the only $H$-optimal graph in $\mathcal{G}_{n,m}^1$ as required. \qed \\

\emph{Proof of Lemma~\ref{lemma:7}}: first, let us prove that there exists precisely $7$ pairs $(n,m)$ in $I^*$ such that 
$n\in \{5,6,7\}$ and $m\notin \{i,\binom{n}{2}-i: i\in \{0,1,2,3\}\}$. 
If $n=5$ then Expression~\eqref{eqkn} yields $k_n=3$, while  
Equation~\eqref{eq:q} gives that $q_5=2$, which is positive. Observe that 
$(2n-3)^2-2(2k_n-3)^2=31$, which is not $-1$ neither $7$. Applying  Lemma~\ref{lemma:Moptimal} when $n=5$ gives that the only choice for $m$ meeting the conditions of the statement is $5$. If $n=6$ then $k_6=4$ while Equation~\eqref{eq:q} gives that $q_6=0$. By Equation~\eqref{eqalphan} we obtain that $\alpha_6=\binom{k_6}{2}=6$. Applying Lemma~\ref{lemma:Moptimal} when $n=6$ gives that the only choices 
for $m$ meeting the conditions of the statement are $6$,$7$, $8$, and $9$. Similarly, if $n=7$ then $k_7=5$ and $q_7=-4$, which is negative. 
Applying Lemma~\ref{lemma:Moptimal} when $n=7$ gives that the only choices for $m$ 
meeting the conditions of the statement is $9$ and $12$. Consequently, there are precisely $7$ pairs $(n,m)$ in $I^*$ meeting the conditions of the statement, namely, 
$(5,5)$, $(6,6),(6,7),(6,8),(6,9)$, $(7,9)$, and $(7,12)$, thus proving the first part of the lemma.

Now, let us find all $H$-optimal graphs in $\mathcal{G}_{n,m}$ in each of the $7$ pairs $(n,m)$ from the statement separately. 

\begin{enumerate}[label=(\roman*)]
\item If $(n,m)=(5,5)$ then Equation~\eqref{eq:kj} gives that 
$k=3$ and $j=1$. As $j\neq 3$ we know that $C_{n,m}^3$ does not exist. 
As $k+1\leq 2k-j-1\leq n-1$, $C_{n,m}^2$ exists. Observe that, as $m=5$, 
Equation~\eqref{eqC2} gives that 
$H(C_{5,5}^2)>H(C_{5,5}^1)$ and Equations~\eqref{eqS2} and \eqref{eqS3} give  that 
$H(S_{5,5}^1)>H(S_{5,5}^2)>H(S_{5,5}^3)$. By definition, the graph $C_{5,5}^2$ equals 
$K_1 \vee (K_2 \cup 2K_1)$, which is isomorphic to the quasi-star graph $S_{5,5}^1$  
thus $S_{5,5}^1$ is the only $H$-optimal in $\mathcal{G}_{5,5}$. 
\item If $(n,m)=(6,6)$ then $k=j=4$ while $k'=4$ and $j'=1$. Then, $C_{n,m}^3$ and $S_{n,m}^2$ do not exist. By Lemma~\ref{lemma:Gong-H}, it is enough to compare $H(S_{6,6}^1)$ with $H(C_{6,6}^1)$. On the one hand, the quasi-star graph $S_{6,6}^1$ has precisely three edges whose endpoints have degree $1$ and $5$, two edges whose endpoints have degrees $5$ and $2$, and one edge both whose endpoints have degree  $2$, thus $M_2(S_{6,6}^1)=3\times 1 \times 5+ 2\times 5\times 2+1\times 2\times 2=39$. As $S_{6,6}^1$ has only one triangle, $H(S_{6,6}^1)=39-6=33$. 
On the other hand, the quasi-complete graph $C_{6,6}^1$ equals $K_4$ plus two isolated vertices thus $C_{6,6}^1$ has only $4$ triangles and the endpoints of each of its six edges have degrees $3$ thus $H(C_{6,6}^1)=6\times 3^2-6\times 4 = 30$. As $H(S_{6,6}^1)>H(C_{6,6}^1)$, Lemma~\ref{lemma:Gong-H} gives that $S_{6,6}^1$ is the only $H$-optimal graph in $\mathcal{G}_{6,6}$. 
\item If $(n,m)=(6,7)$ then $k=4$ and $j=3$, while $k'=4$ and $j'=2$. Then, both graphs $C_{6,7}^3$ and $S_{6,7}^2$ exist. 
As both graphs $C_{6,7}^3$ and $S_{6,7}^2$ are isomorphic to $(K_2 \vee 3K_1)\cup K_1$, Lemma~\ref{lemma:Gong-H} gives that $S_{6,7}^2$ is the only $H$-optimal graph in $\mathcal{G}_{6,7}$.
\item If $(n,m)=(6,8)$ then $k=4$ and $j=2$, while $k'=4$ and $j'=3$. Then, $C_{6,8}^3$ and $S_{6,8}^2$ do not exist. On the one hand, $S_{6,8}^1$ has precisely one edge whose endpoints have degrees $4$ and $5$, one edge whose endpoints have degrees $1$ and $5$, three edges whose endpoints have degrees $2$ and $5$, and three edges whose endpoints have degrees $2$ and $4$, 
thus $M_2(S_{6,8}^1)=1\times 4\times 5 + 1\times 1\times 5 + 3\times 2\times 5 + 3\times 2\times 4 = 79$. As $S_{6,8}^1$ has precisely $3$ triangles, $H(S_{6,8}^1)=61$. On the other hand, $C_{6,8}^1$ 
has precisely one edge whose endpoints have degrees $4$ and $4$, one edge whose endpoints have degrees $3$ and $3$, two edges whose endpoints have degrees $2$ and $4$, and four edges whose endpoints have degrees $3$ and $4$ thus 
$M_2(C_{6,8}^1)=1\times 4\times 4 + 1\times 3\times 3 + 2\times 2\times 4 + 4\times 3\times 4 = 89$. As $C_{6,8}^1$ has precisely $5$ triangles, $H(C_{6,8}^1)=59$. 
As $H(S_{6,8}^1)>H(C_{6,8}^1)$, Lemma~\ref{lemma:Gong-H} gives that $S_{6,8}^1$ is the only $H$-optimal graph in $\mathcal{G}_{6,8}$. 
\item If $(n,m)=(6,9)$ then $k=4$ and $j=1$, while $k'=j'=4$. Then, $C_{6,9}^3$ and $S_{6,9}^2$ do not exist. Lemma~\ref{lemma:Hsum} gives that $H(S_{6,9}^1)+H(\overline{S_{6,9}^1})=H(C_{6,9}^1)+H(\overline{C_{6,9}^1})$. Using that $\overline{S_{6,9}^1}=C_{6,6}^1$ and $\overline{C_{6,9}^1}=S_{6,6}^1$ gives that $H(S_{6,9}^1)-H(C_{6,9}^1)=H(S_{6,6}^1)-H(C_{6,6}^1)=33-30=3$. 
As $H(S_{6,9}^1)>H(C_{6,9}^1)$, Lemma~\ref{lemma:Gong-H} gives that 
$S_{6,9}^1$ is the only $H$-optimal graph in $\mathcal{G}_{6,9}$. 
\item If $(n,m)=(7,9)$ then $k=4$ and $j=1$, while $k'=5$ and $j'=3$. Then, $C_{7,9}^3$ does not exist while $S_{7,9}^2$ exists. By Lemma~\ref{lemma:Gong-H}, it is enough to compare $H(S_{7,9}^2)$ with $H(C_{7,9}^1)$. On the one hand, $S_{7,9}^2$ equals $(K_2 \vee 4K_1) \cup K_1$ hence it has precisely $4$ triangles, the endpoints of eight of its edges have degrees $2$ and $5$, and the endpoint of one of its edges have degrees $5$ and $5$, thus $H(S_{7,9}^2)=8\times 5\times 2 + 1\times 5\times 5-6\times 4=81$. On the other hand, $C_{7,9}^1$ equals $K_5$ minus one edge plus two isolated vertices, hence it has $7$ triangles, the endpoints of six of its edges have degrees $4$ and $3$ while the endpoints of three of its edges have degrees $4$ and $4$, thus $H(C_{7,9}^1)=6\times 4\times 3 + 3\times 4\times 4-6\times 7=78$. 
Consequently, Lemma~\ref{lemma:Gong-H} gives that $S_{7,9}^2$ is the only $H$-optimal graph in $\mathcal{G}_{7,9}$.
\item If $(n,m)=(7,12)$ then $k=5$ and $j=3$, while $k'=4$ and $j'=1$. Then, both graphs $C_{7,12}^3$ and $S_{7,12}^2$ exist. As both graphs $C_{7,12}^3$ and $S_{7,12}^2$ are isomorphic to $(K_3 \vee 3K_1) \cup K_1$, Lemma~\ref{lemma:Gong-H} gives that $S_{7,12}^2$ is the only $H$-optimal graph in $\mathcal{G}_{7,12}$.
\end{enumerate}
By the previous study we conclude that if $(n,m)$ is in $\{(5,5),(6,6),(6,8),(6,9)\}$ then $S_{n,m}^1$ is the only $H$-optimal graph in $\mathcal{G}_{n,m}$, and if 
$(n,m)$ is in $\{(6,7),(7,9),(7,12)\}$ then $S_{n,m}^2$ is the only $H$-optimal graph in $\mathcal{G}_{n,m}$, as required. \qed\\

\emph{Proof of Lemma~\ref{lemma:kandkprime}}: 
as $(n,m)$ is in $J$ we know that $n\geq 8$ and $\frac{1}{2}\binom{n}{2}-\frac{n}{2}\leq m \leq \frac{1}{2}\binom{n}{2}+\frac{n}{2}$. Define the only integers $j$ and $k$ such that $1\leq j\leq k$ and $m=\binom{k+1}{2}-j$. On the one hand,
\begin{align*}
\binom{k+1}{2} &= \frac{1}{2}k(k+1) = m+j \geq m+1 \geq \frac{1}{2}\binom{n}{2}-\frac{n}{2}+1\\
&= \frac{1}{4}(n^2-3n+4) > \frac{1}{4}(n^2-\frac{6}{\sqrt{2}}n+4) = \frac{1}{2}\left(\frac{n}{\sqrt{2}}-2\right)\left(\frac{n}{\sqrt{2}}-1\right). 
\end{align*}
The function $f:\mathbb{R}^+ \to \mathbb{R}^+$ given by $f(x)=\frac{1}{2}x(x+1)$ is increasing. We already proved that $f(k)>f(\frac{n}{\sqrt{2}}-2)$ thus $k>\frac{n}{\sqrt{2}}-2$. 

On the other hand $\binom{k+1}{2}=m+j \leq \frac{1}{2}\binom{n}{2}+\frac{n}{2}+k$. 
Then,
\begin{align*}
\frac{1}{2}(k-1)k&= \binom{k+1}{2}-k \leq \frac{1}{2}\binom{n}{2}+\frac{n}{2} = \frac{1}{4}n(n+1) < \frac{1}{2}\frac{n}{\sqrt{2}}\left(\frac{n}{\sqrt{2}}+1\right),     
\end{align*}
and $f(k-1)<f(\frac{n}{\sqrt{2}})$ thus $k-1 < \frac{n}{\sqrt{2}}$, or equivalently, $k<\frac{n}{\sqrt{2}}+1$. Thus far, we proved that $\frac{n}{\sqrt{2}}-2<k<\frac{n}{\sqrt{2}}+1$. 

Finally, define the only integers $j'$ and $k'$ such that $1\leq j'\leq k'$ satisfying 
Equation~\eqref{eq:kj'}, that is, $m = \binom{n}{2}-\binom{k'+1}{2}+j'$. 
Let $m'=\binom{n}{2}-m$. Then $m'=\binom{k'+1}{2}-j'$. Observe that $m'$ satisfies that $\frac{1}{2}\binom{n}{2}-\frac{n}{2}\leq m' \leq \frac{1}{2}\binom{n}{2}+\frac{n}{2}$. The previous reasoning to prove that $\frac{n}{\sqrt{2}}-2<k<\frac{n}{\sqrt{2}}+1$ holds replacing $k$, $j$ and $m$ by $k'$, $j'$ and $m'$ respectively, and we get that $\frac{n}{\sqrt{2}}-2 < k' <\frac{n}{\sqrt{2}}+1$, as required. \qed\\

\emph{Proof of Lemma~\ref{lemma:bounds}}: as $(n,m)$ is in $J$ we know that $\frac{1}{2}\binom{n}{2}-\frac{n}{2}\leq m \leq \frac{1}{2}\binom{n}{2}+\frac{n}{2}$. Define the only integers $j$, $k$, $j'$ and $k'$ satisfying Equations~\eqref{eq:kj} and \eqref{eq:kj'} such that $1\leq j\leq k$ and $1\leq j' \leq k'$. 
Equations~\eqref{eq:kj} and \eqref{eq:kj'} give that $m=\binom{k+1}{2}-j$ and 
$m=\binom{n}{2}-\binom{k'+1}{2}+j'$. Let us prove each of the assertions in order. 

\begin{enumerate}[label=(\roman*)]
\item We want to prove that a lower bound for the function $H(C_{n,m}^1)-H(S_{n,m}^1)$ is the polynomial $p(n)$ given in Equation~\eqref{eq:p}. Our proof strategy is the following. First, we will find a lower bound $p_1(n)$ for $H(C_{n,m})$, i.e., 
$H(C_{n,m}^1)\geq p_1(n)$. Then, we will find upper bounds $p_2(n)$ and $p_3(n)$ for the 
functions $M_1(S_{n,m}^1)$ and $h(n,m)$, i.e., $M_1(S_{n,m}^1)\leq p_2(n)$ and $h(n,m)\leq p_3(n)$. Finally, define $p(n)=2p_1(n)-(n-\frac{9}{2})p_2(n)-p_3(n)$. As $n\geq 8$ we know in particular that $n-\frac{9}{2}\geq 0$, and 
Equation~\eqref{eq:H} gives that 
\begin{align}
H(C_{n,m}^1)-H(S_{n,m}^1)&= H(C_{n,m}^1)-((n-\frac{9}{2})M_1(S_{n,m}^1)+h(n,m)-H(C_{n,\binom{n}{2}-m}^1)) \nonumber\\
&=H(C_{n,m}^1)+H(C_{n,\binom{n}{2}-m}^1)-(n-\frac{9}{2})M_1(S_{n,m}^1)-h(n,m)\nonumber\\
&\geq 2p_1(n)-(n-\frac{9}{2})p_2(n)-p_3(n)=p(n), \label{eq:ineqH}
\end{align}
where we used that $\overline{S_{n,m}^1}=C_{n,\binom{n}{2}-m}^1$ and that the number $m'$ given by $\binom{n}{2}-m$ also satisfies that $(n,m')$ is in $J$ hence 
the inequality $H(C_{n,m'}^1)\geq p_1(n)$ holds.

First, let us find a polynomial $p_1(n)$ such that $H(C_{n,m}^1)\geq p_1(n)$. By Lemma~\ref{lemma:C1},
\begin{equation}\label{eq:C1exactly}
H(C_{n,m}^1)= \frac{1}{2}k^4-\frac{1}{2}k^3-3jk^2+(j^2+7j+1)k-\frac{5j^2+7j}{2}.
\end{equation}
Lemma~\ref{lemma:kandkprime} gives that $\frac{n}{\sqrt{2}}-2<k<\frac{n}{\sqrt{2}}+1$. We also know that 
$1\leq j \leq k$. Therefore, $-3jk^2 \geq -3k^3$, $(j^2+7j+1)k\geq 9k$, and $-\frac{5j^2+7j}{2}\geq -\frac{5k^2+7k}{2}$. Replacing the previous bounds into Equation~\eqref{eq:C1exactly} yields
\begin{align*}
H(C_{n,m}^1) &\geq \frac{1}{2}k^4-\frac{1}{2}k^3-3k^3+9k-\frac{5}{2}k^2-\frac{7}{2}k\\
&= \frac{1}{2}k^4-\frac{7}{2}k^3-\frac{5}{2}k^2+\frac{11}{2}k\\
&> \frac{1}{2}\left(\frac{n}{\sqrt{2}}-2 \right)^4 -  
\frac{7}{2}\left(\frac{n}{\sqrt{2}}+1 \right)^3 - 
\frac{5}{2}\left(\frac{n}{\sqrt{2}}+1 \right)^2+
\frac{11}{2}\left(\frac{n}{\sqrt{2}}-2 \right)\\
&= \frac{1}{2}\left(\frac{1}{4}n^4-2\sqrt{2}n^3+12n^2-16\sqrt{2}n + 16\right) - 
\frac{7}{2}\left(\frac{1}{2\sqrt{2}}n^3 + \frac{3}{2}n^2+\frac{3}{\sqrt{2}}n+1\right)\\
&- \frac{5}{2}\left(\frac{1}{2}n^2+\sqrt{2}n+1\right)+
\frac{11}{2}\left(\frac{n}{\sqrt{2}}-2 \right)\\
&= \frac{1}{8}(n^4-\frac{15}{8}\sqrt{2}n^3-\frac{1}{2}n^2-13\sqrt{2}n-9)=p_1(n).
\end{align*}

Recall that, by Lemma~\ref{lemma:ZagregSnm1},
\begin{equation}\label{eq:M1int}
M_1(S_{n,m}^1) = (n-k'-1+j')^2+j'(n-k')^2+(k'-j')(n-k'-1)^2+(n-k'-1)(n-1)^2.    
\end{equation}

As $k'<n$ we get that $(n-k'-1)^2<(n-k')^2$ and $j'(n-k')^2+(k'-j')(n-k'+1)^2\leq k'(n-k')^2$. As $j'\leq k'$ we get that $(n-k'-1+j')^2 \leq (n-1)^2$.
Replacing the previous inequalities into Equation~\eqref{eq:M1int} gives that
\begin{align}
M_1(S_{n,m}^1)&\leq k'(n-k')^2+(n-k')(n-1)^2. \label{eq:M1int2}
\end{align}
As $(n,m)$ is in $J$, Lemma~\ref{lemma:kandkprime} gives that $\frac{\sqrt{2}}{2}n-2<k'<\frac{\sqrt{2}}{2}n+1$. Then, $n-k'<(1-\frac{\sqrt{2}}{2})n+2$. 
Replacing these inequalities into Expression~\eqref{eq:M1int2} yields
\begin{align*}
M_1(S_{n,m}^1) &< (\frac{\sqrt{2}}{2}n+1) ((1-\frac{\sqrt{2}}{2})n+2)^2+ ((1-\frac{\sqrt{2}}{2})n+2)(n-1)^2\\
&= (\frac{\sqrt{2}}{2}n+1)( \frac{3-2\sqrt{2}}{2}n^2+(4-2\sqrt{2})n+4)
+ 
 ( (1-\frac{\sqrt{2}}{2})n+2)(n^2-2n+1)\\
&= \frac{\sqrt{2}}{4}n^3+(2\sqrt{2}-\frac{1}{2})n^2+(1-\frac{\sqrt{2}}{2})n+6 = p_2(n)
\end{align*}

Now, let us find a lower bound for $h(n,m)$. By Lemma~\ref{lemma:Hsum} we know the following closed form that holds for any pair $(n,m)$ in $I$,
\begin{equation}\label{eq:hnm2}
h(n,m) = 2m^2-6\binom{n}{3}+(n-1)^2\binom{n}{2}-3(n-1)(n-3)m.    
\end{equation}
As $(n,m)$ is in $J$ we know that $\frac{1}{2}\binom{n}{2}-\frac{n}{2}\leq  m \leq \frac{1}{2}\binom{n}{2}+\frac{n}{2}$. Replacing these bounds for $m$ into Equation~\eqref{eq:hnm2} gives that
\begin{align*}
h(n,m) &\leq 2(\frac{1}{2}\binom{n}{2}+\frac{n}{2})^2-6\binom{n}{3}+(n-1)^2\binom{n}{2}-3(n-1)(n-3)(\frac{1}{2}\binom{n}{2}-\frac{n}{2})\\
&= 2 \left(\frac{n(n+1)}{4}\right)^2-n(n-1)(n-2)+\frac{1}{2}n(n-1)^3-\frac{3}{4}n(n-1)(n-3)^2\\
&= \frac{1}{8}(n^4+2n^3+n^2)-(n^3-3n^2+2n)\\
&+\frac{1}{2}(n^4-3n^3+3n^2-n) - \frac{3}{4}(n^2-n)(n^2-6n+9)\\
&= \frac{1}{8}(-n^4+24n^3-53n^2+34n) = p_3(n). 
\end{align*}
Finally, Expression~\eqref{eq:ineqH} gives that
\begin{align*}
H(C_{n,m}^1)-H(S_{n,m}^1)&\geq 2p_1(n)- (n-\frac{9}{2})p_2(n)-p_3(n)\\
&= \frac{2}{8}(n^4-\frac{15}{8}\sqrt{2}n^3-\frac{1}{2}n^2-13\sqrt{2}n-9)\\
&-(n-\frac{9}{2})(\frac{\sqrt{2}}{4}n^3+(2\sqrt{2}-\frac{1}{2})n^2+(1-\frac{\sqrt{2}}{2})n+6)\\
&-\frac{1}{8}(-n^4+24n^3-53n^2+34n)\\
&= \frac{1}{8}((3-2\sqrt{2})n^4-(20+37\sqrt{2})n^3\\
&+ (19+76\sqrt{2})n^2-(46+226\sqrt{2})n+72) = p(n),
\end{align*}
thus proving Assertion~\ref{i}.
\item As $(n,m)$ is in $J$ we know that $n\geq 8$ and  
$m\leq \frac{1}{2}\binom{n}{2}+\frac{n}{2}$. 
Equation~\eqref{eq:kj'} gives that $m=\binom{n}{2}-\binom{k'+1}{2}+j'$ where $1\leq j'\leq k'$. Consequently, 
$\binom{k'+1}{2}=\binom{n}{2}-m+j'\geq \binom{n}{2}-m\geq \frac{1}{2}\binom{n}{2}-\frac{n}{2}=\frac{n(n-3)}{4}\geq 10$ and, as $k$ is an integer, $k'\geq 4$. In particular, $k'-\frac{7}{2}$ is positive. By Equations~\eqref{eqS2} and \eqref{eqS3}, $H(S_{n,m}^2)-H(S_{n,m}^1)=(k'-\frac{7}{2})(k'-j')(k'-j'-1) \leq (k'-\frac{7}{2})(k')^2$ and 
$H(S_{n,m}^3) - H(S_{n,m}^1)=-3$. Define $g(k')$ as $(k'-\frac{7}{2})(k')^2$. Observe that $g(k')$ is a positive upper bound for $H(S_{n,m}^2) - H(S_{n,m}^1)$. 
Now, let $i$ and $j$ be any integers in $\{1,2,3\}$. If $i=j$ then 
$|H(S_{n,m}^i) - H(S_{n,m}^j)|=0<g(k')<g(k')+3$. If $i\neq j$ but one of them equals $1$, say $i$, then $|H(S_{n,m}^i) - H(S_{n,m}^j)|=|H(S_{n,m}^1) - H(S_{n,m}^j)|\leq \max\{3,g(k')\}\leq g(k')+3$. Otherwise $\{i,j\}=\{2,3\}$, and 
$|H(S_{n,m}^i) - H(S_{n,m}^j)|=|H(S_{n,m}^2) - H(S_{n,m}^3)|=|H(S_{n,m}^2) - H(S_{n,m}^1)|+|H(S_{n,m}^1) - H(S_{n,m}^3)| \leq g(k')+3$. In either case, 
$|H(S_{n,m}^i) - H(S_{n,m}^j)|\leq g(k')+3$. Finally, we will prove that $g(k')+3\leq q(n)$ where $q(n)$ is defined by Equation~\eqref{eq:q}. In fact, by Lemma~\ref{lemma:kandkprime} we know that $k'<\frac{n}{\sqrt{2}}+1$. Consequently,
\begin{align*}
g(k')+3 & = (k'-\frac{7}{2})(k')^2+3 \leq \left(\frac{n}{\sqrt{2}}-\frac{5}{2}\right)
\left( \frac{n}{\sqrt{2}}+1\right)^2 + 3\\
&=  \left(\frac{n}{\sqrt{2}}-\frac{5}{2}\right) 
\left( \frac{n^2}{2}+ \sqrt{2}n+1\right) + 3\\
&= \frac{\sqrt{2}}{4}n^3-\frac{1}{4}n^2-2\sqrt{2}n+\frac{1}{2}\\
&= \frac{1}{8}\left( (2\sqrt{2})x^3-2x^2-16\sqrt{2}x+4\right) = q(n).
\end{align*}
Consequently, we proved that for each pair of integers $i$ and $j$ in $\{1,2,3\}$ it holds that $|H(S_{n,m}^i)-H(S_{n,m}^j)| \leq g(k')+3 \leq q(n)$, as required. \qed\\
\end{enumerate}

\emph{Proof of Lemma~\ref{lemma:437}}: let $n$ and $m$ be integers such that $(n,m)$ is in $I^*$ and $n\geq 437$. 
Define the polynomials $p(x)$ and $q(x)$ by Equations~\eqref{eq:p} and \eqref{eq:q}, respectively. 
Let $r(x)=p(x)-q(x)$. 
As $n\geq 437$ we know by Remark~\ref{remark:r} that $r(n)>0$. Let us prove each of the assertions separately.
\begin{enumerate}[label=(\roman*)]
\item\label{iv} Assume $C_{n,m}^3$ exists. Let $G$ be any graph in $\mathcal{O}_{n,m}-\{C_{n,m}^3\}$. If $G$ is either $C_{n,m}^1$ or $C_{n,m}^2$ then by Lemma~\ref{lemma:chains}\ref{chains2} we know that $H(C_{n,m}^3)>H(G)$. 
Finally, if $G$ is $S_{n,m}^i$ for some $i\in \{1,2,3\}$ then
\begin{align*}
H(C_{n,m}^3)-H(G) &= H(C_{n,m}^3)-H(S_{n,m}^i)\\
&= H(C_{n,m}^3)-H(C_{n,m}^1)+H(C_{n,m}^1)-H(S_{n,m}^1)-
(H(S_{n,m}^i)-H(S_{n,m}^1))\\
&\geq 
H(C_{n,m}^3)-H(C_{n,m}^1)+H(C_{n,m}^1)-H(S_{n,m}^1)-
|H(S_{n,m}^i)-H(S_{n,m}^1)|\\
&\geq  3 + p(n)-q(n) = 3+r(n)>0,
\end{align*}
where we used that $H(C_{n,m}^3)-H(C_{n,m}^1)= 3$ by Equation~\eqref{eqC3}, that $H(C_{n,m}^1)-H(S_{n,m}^1)\geq p(n)$ by Lemma~\ref{lemma:bounds}\ref{i}, and that 
$|H(S_{n,m}^i)-H(S_{n,m}^1)|\leq q(n)$ by Lemma~\ref{lemma:bounds}\ref{ii}. 
\item Assume $C_{n,m}^3$ does not exist. Let $G$ be any graph in $\mathcal{O}_{n,m}-\{C_{n,m}^1\}$. We want to prove that $H(C_{n,m}^1)>H(G)$. If $G$ is $C_{n,m}^2$ then by Lemma~\ref{lemma:chains}\ref{chains2} we know that $H(C_{n,m}^1)>H(G)$. 
Finally, if $G$ is $S_{n,m}^i$ for some $i\in \{1,2,3\}$ then
\begin{align*}
H(C_{n,m}^1)-H(G) &= H(C_{n,m}^1)-H(S_{n,m}^i)=
H(C_{n,m}^1)-H(S_{n,m}^1)-
(H(S_{n,m}^i)-H(S_{n,m}^1))\\
&\geq  p(n)-q(n) = r(n)>0,
\end{align*}
where we used that $H(S_{n,m}^1)-H(C_{n,m}^1)\geq p(n)$ by Lemma~\ref{lemma:bounds}\ref{i}  and the fact that 
$H(S_{n,m}^i)-H(S_{n,m}^1)\leq |H(S_{n,m}^i)-H(S_{n,m}^1)|\leq q(n)$ by Lemma~\ref{lemma:bounds}\ref{ii}. \qed
\end{enumerate}

\begin{thebibliography}{8}

\bibitem{Abrego-2009}
B. Ábrego, S. Fernández-Merchant, M. Neubauer and W. Watkins.
\newblock Sum of squares of degrees in a graph.
\newblock {\em Journal of Inequalities in Pure and Applied Mathematics}, 10(3):34, 2009.

\bibitem{Bertrand-2018}
H. Bertrand, O. Goff, C. Graves and M. Sun.
\newblock On uniformly most reliable two-terminal graphs.
\newblock {\em Networks}, 72(2):200-216, 2018.

\bibitem{Brown-2014}
J. Brown and D. Cox.
\newblock Nonexistence of Optimal Graphs for all terminal reliability.
\newblock{\em Networks}, 63(2):146-153, 2014.

\bibitem{Byer-1999}
O. Byer.
\newblock Two path extremal graphs and an application to a Ramsey-type problem.
\newblock{\em Discrete Mathematics}, 196(1):51-64, 1999.

\bibitem{Gong-2024}
S. Gong and L. Lin. 
\newblock On the construction of locally most reliable two-terminal graphs.
\newblock{ \em Discrete Applied Mathematics}, 356:393-402, 2024.

\bibitem{1999-Peled}
U. Peled, R. Petreschi and A. Sterbini.
\newblock (n,e)-Graphs with maximum sum of squares of degrees.
\newblock{ \em Journal of Graph Theory}, 31(4):283-295, 1999.

\bibitem{2025-Romero-TCS}
P. Romero. 
\newblock Characterization of locally most split reliable graphs.
\newblock {\em Theoretical Computer Science}, 1047:115327, 2025.

\bibitem{Xie-2021}
S. Xie, H. Zhao, J. Yin.
\newblock Nonexistence of uniformly most reliable two-terminal graphs.
\newblock{ \em Theoretical Computer Science}, 892:279-288, 2021.
\end{thebibliography}
\end{document}